\newtheorem{theorem}{Theorem}
\newtheorem{proposition}{Proposition}
\newtheorem{definition}{Definition}
\newtheorem{lemma}{Lemma}
\newtheorem{corollary}{Corollary}
\newtheorem{fact1}{Fact}
\title{R$\acute{\text E}$nyi Differential Privacy in the Shuffle Model: Enhanced Amplification Bounds}
\name{E Chen$^{1,*}$ \quad Yang Cao$^{2,*}$ \thanks{*Correspondence:chene@zhejianglab.com, 
yang@ist.hokudai.ac.jp} \quad Yifei Ge$^{3}$}
\address{$^{1}$ Zhejiang Lab \quad
$^{2}$ Hokkaido University \quad 
$^{3}$ Xi'an Jiaotong-Liverpool University}
\newcommand{\e}{$\acute{\text e}$}
\begin{document}
%\ninept
%
\maketitle
\begin{abstract}
% R\e nyi Differential Privacy (RDP) is an advanced refinement and extension of Differential Privacy (DP) that provides improved privacy guarantees compared to traditional methods. As a result, it has become a central focus in the field of privacy protection.
The shuffle model of Differential Privacy (DP) has gained significant attention in privacy-preserving data analysis due to its remarkable tradeoff between privacy and utility. 
It is characterized by adding a shuffling procedure after each user's locally differentially private perturbation, which leads to a  \textit{privacy amplification} effect, meaning that the privacy guarantee of a small level of noise, say $\epsilon_0$, can be enhanced to $O(\epsilon_0/\sqrt{n})$ (the smaller, the more private) after shuffling all $n$ users' perturbed data. 
Most studies in the shuffle DP focus on proving a tighter privacy guarantee of privacy amplification.
However, the current results assume that the local privacy budget $\epsilon_0$ is within a limited range.
In addition, there remains a gap between the tightest lower bound and the known upper bound of the privacy amplification.
In this work, we push forward the state-of-the-art by making the following contributions. 
Firstly, we present the first asymptotically optimal analysis of R\e nyi Differential Privacy (RDP) in the shuffle model without constraints on $\epsilon_0$. 
Secondly, we introduce hypothesis testing for privacy amplification through shuffling, offering a distinct analysis technique and a tighter upper bound.
Furthermore, we propose a DP-SGD algorithm based on RDP. 
Experiments demonstrate that our approach outperforms existing methods significantly at the same privacy level.
 % \footnote{The full version with proofs can be found on website https://github.com/EChen233/A-full-version-of-RDP-in-the-shuffle-model. }
 \renewcommand{\thefootnote}{}
\footnote{
This work is partially support by JST CREST JPMJCR21M2, JSPS KAKENHI Grant Number JP22H00521, JP22H03595, JP21K19767, JST/NSF Joint Research SICORP JPMJSC2107.
}

\begin{keywords}
R\e nyi Differential Privacy, Shuffle Model, Privacy Amplification, Hypothesis Testing
\end{keywords}
\end{abstract}
\section{Introduction}\label{sec-1}
% R\e nyi Differential Privacy (RDP) \cite{mironov2017renyi} is a variant of Differential Privacy (DP), which is currently the standard and elegant framework for protecting privacy \cite{dwork2014algorithmic,dwork2016calibrating}. RDP provides a flexible framework for quantifying privacy guarantees by introducing the R\e
%  nyi order parameter, allowing fine-grained control over privacy levels and facilitating the composition of privacy guarantees.
% While advanced composition theorems for DP \cite{dwork2014algorithmic} can quantify privacy leakage, those bounds may not be sufficiently tight.
% To address this limitation, the "moment account" framework was developed by \cite{abadi2016deep}, enabling a much tighter composition. This is achieved by providing the composition privacy guarantee in terms of RDP and subsequently mapping it back to the DP guarantee \cite{mironov2017renyi}. 
R\e nyi Differential Privacy (RDP) \cite{mironov2017renyi,abadi2016deep} is a variant of Differential Privacy (DP) \cite{dwork2014algorithmic,dwork2016calibrating}, which has become the preferred and more sophisticated framework for ensuring privacy protection. While DP is widely regarded as the standard framework for privacy protection, RDP offers enhanced privacy guarantees and finer control over privacy levels by introducing the R\e nyi order parameter. Although advanced composition theorems exist for DP \cite{dwork2014algorithmic} to quantify privacy leakage, it should be noted that these bounds may not always be tight.
%To overcome this limitation, the ``moment account'' framework was developed  to enable a significantly more rigorous analysis \cite{abadi2016deep}.

The interest in the shuffle model has been driven by its privacy amplification effect \cite{balle2019privacy,cheu2019distributed}. This effect becomes notably pronounced because there is often a trade-off between utility and individual privacy protection through Local Differential Privacy (LDP) \cite{xiong2020comprehensive}.
The primary focus in this area is to achieve more precise RDP bounds for privacy amplification \cite{erlingsson2019amplification,girgis2021renyi,feldman2022hiding,feldman2023stronger}.  
 Figure \ref{Fig:shufflePlot} illustrates the shuffle model, where the choice between adaptive and non-adaptive approaches is determined by the query's dependency on previous queries.

However, it is worth noting that the existing results not only impose limitations on the local privacy budget $\epsilon_0$ but also leave a considerable gap to reach the lower bound of RDP. This suggests the need for further advancements and exploration in this area to bridge the existing gap.

In this work, we provide the first asymptotically optimal analysis for RDP in the shuffle model without any additional constraints on $\epsilon_0$. By transforming the original problem into a simpler task of measuring the distance between two multinomial distributions \cite{feldman2022hiding}, we improve the RDP bound using hypothesis testing \cite{kairouz2015composition,dong2022gaussian}. To provide a clear and convenient comparison, we outline both previous findings and our own results in Table \ref{Tab:RDPComparison}. Despite the advancements and continuous improvements in RDP bounds under the shuffle model, previous studies have imposed constraints on the privacy budget $\epsilon_0$ (such as $0<\epsilon_0<1/2$ \cite{erlingsson2019amplification}).   In contrast, our method eliminates such restrictions and is suitable for all $\epsilon_0>0$.  Furthermore, we introduce a DP-SGD algorithm based on RDP, leveraging the RDP bounds obtained from our asymptotically optimal analysis. The numerical results demonstrate that our proposed algorithm significantly surpasses existing methods in terms of both preserving privacy and maintaining utility. 

\begin{figure}[htbp]
	\centering
		\includegraphics[scale=0.2]{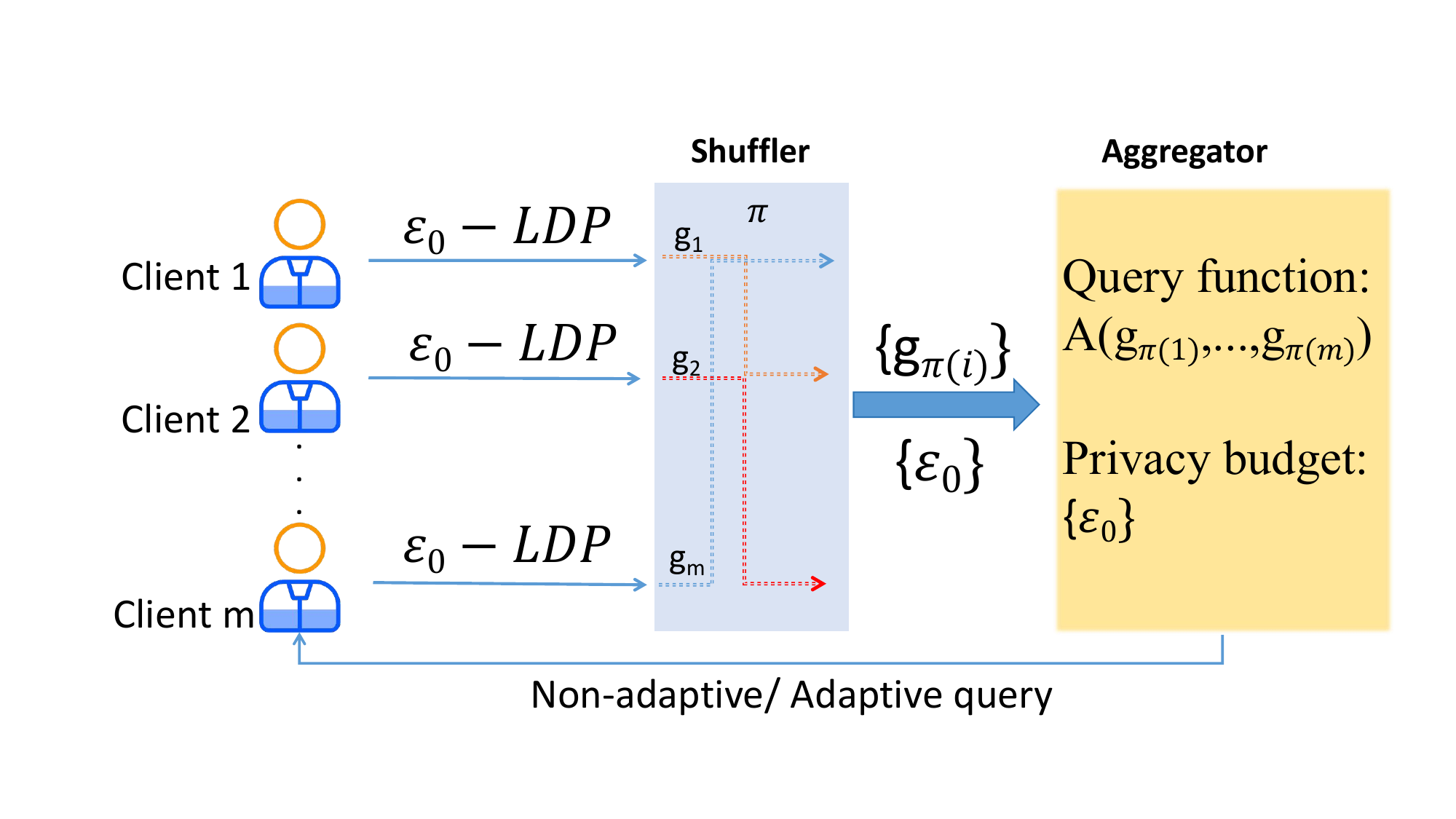}
		\caption{The shuffle model with $\epsilon_0$-LDP users}
  \label{Fig:shufflePlot}
\end{figure}

% Table generated by Excel2LaTeX from sheet 'Sheet1'
\begin{table}[htbp]
% 设置表格行距为 1.5 倍
  \renewcommand{\arraystretch}{1.5}
  \centering
  \small
  
    \begin{tabular}{|l|l|c|}
    \hline
    \multicolumn{1}{|l|}{Methods} & \multicolumn{1}{p{4.11111116409302em}|}{Upper \newline{}Bound } & \multicolumn{1}{p{4.11111116409302em}|}{Lower \newline{}Bound \cite{girgis2021renyi}} \\
    \hline
    Erlingsson et al. (SODA, 2019) \cite{erlingsson2019amplification} & $O( \frac{e^{6\epsilon_0}\lambda}{n})$     & \multirow{4}{*}{\centering $ O(\frac{e^{\epsilon_0} \lambda}{n})$} \\
\cline{1-2}    Girgis et al. (CCS, 2021) \cite{girgis2021renyi} & $O( \frac{e^{2\epsilon_0}\lambda}{n})$     &  \\
\cline{1-2}Feldman et al. (FOCS, 2022) \cite{feldman2022hiding}  & \multicolumn{1}{c|}{\multirow{2}{*}{\centering $O( \frac{64e^{\epsilon_0} \lambda}{n})$}} &  \\
\cline{1-1} Feldman et al. (SODA, 2023) \cite{feldman2023stronger} &       &  \\
\cline{1-2}    This work & $O( \frac{2e^{\epsilon_0}\lambda}{n})$     &  \\
    \hline
    \end{tabular}%
  \caption{Privacy amplification of RDP via shuffling with order $\lambda$. For the sake of brevity, we have not listed the constraints on $\epsilon_0$ found in previous studies.}
\label{Tab:RDPComparison}%
\end{table}%

\section{Preliminaries and Notations}
This section gives essential terminology, definitions and properties related to differential privacy. Due to space limitations, proofs will be included in the appendix.
% \begin{definition}\label{ApprDP}($(\epsilon,\delta)$-Central Differential Privacy)
% A randomized algorithm $M$ %with domain $\mathbb{N}^{|\mathcal{X}|}$
% is called $(\epsilon,\delta)$-indistinguishable if for all $S \subseteq$ Range$(M)$ and for all neighboring databases $D_0,D_1$:
% \begin{equation}\label{epsilonDeltaDis}
% \mathbb{P}( M(D_0)\in S) \leq e^\epsilon\mathbb{P}(M(D_1) \in S)+\delta,
% \end{equation}
% where $D_0$ and $D_1$ are considered neighboring if they differ by exactly one record, and we denote this relationship as $D_0 \sim D_1$.
% \end{definition}
% Central Differential Privacy requires a trustworthy server, which is difficult in practice. A stronger privacy guarantee for each individual users can be given in the local setting as there is no need to trust a centralized authority \cite{kasiviswanathan2011can}. 
\begin{definition}
[Pure Differential Privacy]
% [$\epsilon$-Local Differential Privacy, $\epsilon$-LDP ]
A randomized algorithm $\mathcal{R}:\mathcal{D} \rightarrow \mathcal{S}$ satisfies 
$\epsilon$-DP
if for all neighboring datasets $D_0, D_1 \in \mathcal{D}^n$, \begin{equation}\label{epsilonDeltaDis}
\mathbb{P}( \mathcal{R}(D_0)\in S) \leq e^\epsilon\mathbb{P}(\mathcal{R}(D_1) \in S).
\end{equation}
Here $D_0$ and $D_1$ are considered neighboring if they differ by exactly one record. If the randomized mechanism is localized, it is called Local Differential Privacy (LDP).
% and we denote this relationship as $D_0 \sim D_1$.
\end{definition}

% \begin{lemma}[Laplace mechanism \cite{dwork2014algorithmic}]\label{LaplacePrivacy}
% For a function $g: D \rightarrow \mathbb{R}^d$, let $l_1$ sensitivity be defined as
% $\Delta(g) = \max_{D_0 \sim D_1} \|g(D_0)-g(D_1)\|_1$,
% then for any $\epsilon \in (0,1)$, the noisy output $h(D) = g(D)+ Lap(\Delta(g)/\epsilon)$ satisfies $(\epsilon,0)$-DP.
% \end{lemma}
\begin{definition}\label{RDP-DF}($R\acute{e}nyi$ Divergence)
For two random variables $U$ and $V$, the $R\acute{e}nyi$ divergence of $U$ and $V$ of
order $\lambda>1$ is defined as:
\begin{equation}
D^{\lambda}(U\| V)=\frac{1}{\lambda-1}\log \mathop{\mathbb{E}}\limits_{x \sim V}\left[\left(\frac{U(x)}{V(x)}\right)^\lambda\right].
\end{equation}
\end{definition}

Introduced in \cite{mironov2017renyi}, R\e nyi differential privacy (RDP) can be defined based on  R\e nyi divergence.
\begin{definition}[R\e nyi Differential Privacy]
A randomized mechanism $\mathcal{R}$ is said to be $(\lambda,\epsilon(\lambda))$-RDP
if for all neighbouring pairs $D_0 \sim D_1$, it holds that
  $D^\lambda(\mathcal{R}(D_0)\| \mathcal{R}(D_1)) \le \epsilon.$
\end{definition}
Finally, we establish the framework of privacy protection algorithm under consideration in this paper. The notation $[n]$ represents the set of natural numbers from $1$ to $n$.
For a domain $\mathcal{D}$, let $\mathcal{R}^{(i)}:\mathcal{S}^{(1)}\times \mathcal{S}^{(2)} \times \cdots \times \mathcal{S}^{(i-1)} \times \mathcal{D} \rightarrow \mathcal{S}^{(i)}$ ($i$ $\in [n]$) be a sequence of algorithms such that $\mathcal{R}^{(i)}(z_{1:i-1},\cdot)$ is an $\epsilon_0$-LDP randomizer for all values of auxiliary inputs $z_{1:i-1} \in \mathcal{S}^{(1)} \times \mathcal{S}^{(2)} \times \cdots \times \mathcal{S}^{(i-1)}$ where $\mathcal{S}^{(i)}$ is the range space of $\mathcal{R}^{(i)}$. Let $\mathcal{A}_R: \mathcal{D} \rightarrow \mathcal{S}^{(1)} \times \mathcal{S}^{(2)} \times \cdots \times \mathcal{S}^{(n)}$ represent the algorithm applied to the given dataset $x_{1:n} \in \mathcal{D}^n$. The algorithm sequentially computes $z_i = \mathcal{R}^{(i)}(z_{1:i-1},x_i)$ for $i \in [n]$ and outputs $z_{1:n}$.
 We refer to $\mathcal{A}_R(\mathcal{D})$ as an $\epsilon_0$-LDP adaptive process. Alternatively, if we first uniformly sample a permutation $\pi: [n] \rightarrow [n]$, and then sequentially compute $z_i = \mathcal{R}^{(i)}(z_{1:i-1},x_{\pi_i})$ for $i \in [n]$, we say it is a shuffled process and denote it as $\mathcal{A}_{R,S}(\mathcal{D})$.
 Here, $\pi_i = \pi(i)$ represents the position of $i$ after permutation.

For the sake of brevity and convenience of notation, we omit $D$ and use $\mathcal{A}_{R}$, $\mathcal{A}_{R,S}$ to represent the
adaptive process and the shuffled adaptive process, respectively.

\begin{proposition}\label{Pro:DP2Multi}
(Feldman et al. \cite{feldman2022hiding})
For a domain $\mathcal{D}$, let $\mathcal{A}_R$ be the $\epsilon_0$-LDP adaptive process and $\mathcal{A}_{R,S}$ be the related shuffled $\epsilon_0$-LDP adaptive process.
  Assume $X_0$ = $(x^0_1,x_2,\ldots,x_n)$ and $X_1 = (x^1_1,x_2,\ldots,x_n)$ be two neighbouring datasets such that for all $j \neq 1, x_j \notin \{x^0_1,x^1_1\}$. Suppose that there exists a positive value $p \in (0,1]$ such that for all $i \in [n], x \in \mathcal{D} \backslash \{x^0_1,x^1_1\}$ and $z_{1:i-1} \in \mathcal{S}^{(1)} \times \mathcal{S}^{(2)} \times \cdots \times \mathcal{S}^{(i-1)}$, there exists a distribution $LO^{(i)}(z_{1:i-1},x)$ such that
\begin{equation}
\begin{split}
\mathcal{R}^{(i)}(z_{1:i-1},x)  =& \frac{p}{2}\mathcal{R}^{(i)}(z_{1:i-1},x^0_1)+\frac{p}{2}\mathcal{R}^{(i)}(z_{1:i-1},x^1_1) \\
&+
(1-p) LO^{(i)}(z_{1:i-1},x).
\end{split}
\end{equation}
Then there exists a randomized postprocessing algorithm $f$ such that $\mathcal{A}_s(X_0)$ is distributed identically to $f(A+\Delta,C-A+1-\Delta)$ and $\mathcal{A}_{R,S}(X_1)$ is distributed identically to $f(A+1-\Delta,C-A+\Delta)$, where $p=e^{-\epsilon_0}, \Delta \sim Bern(\frac{e^{\epsilon_0}}{e^{\epsilon_0}+1}), C \sim Bin(n-1,p), A \sim Bin(C,1/2)$.
\end{proposition}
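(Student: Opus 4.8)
The plan is to run the ``hiding among the clones'' argument \cite{feldman2022hiding}: to show that after shuffling the adversary's view is a randomized post-processing of a small count statistic — the number of user contributions that are statistically interchangeable with a contribution user~$1$ could have produced, together with a single biased coin pinned to user~$1$ itself.

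\emph{Clones.} First I would verify that for $\epsilon_0$-LDP randomizers the hypothesis is met with $p=e^{-\epsilon_0}$: by LDP, $\mathcal{R}^{(i)}(z_{1:i-1},x)(s)\ge e^{-\epsilon_0}\max\{\mathcal{R}^{(i)}(z_{1:i-1},x_1^0)(s),\mathcal{R}^{(i)}(z_{1:i-1},x_1^1)(s)\}\ge\tfrac{e^{-\epsilon_0}}{2}\big(\mathcal{R}^{(i)}(z_{1:i-1},x_1^0)(s)+\mathcal{R}^{(i)}(z_{1:i-1},x_1^1)(s)\big)$, so the residual is nonnegative and, after dividing by $1-p$, is a valid distribution $LO^{(i)}$. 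Carrying this decomposition through the sequential composition, I would attach to each $j\ge 2$ a latent indicator $b_j\sim\mathrm{Bern}(p)$ of whether the randomizer at $x_j$'s position ``acts as a clone'' and, when $b_j=1$, a fair coin selecting the $x_1^0$- or $x_1^1$-branch. Then $C:=\sum_{j\ge2}b_j\sim\mathrm{Bin}(n-1,p)$ and the number of clones on the $x_1^0$-branch is $A\mid C\sim\mathrm{Bin}(C,1/2)$.

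\emph{Collapsing to the special outputs.} Conditioned on $C$ and the uniform permutation, the $n-1-C$ ``leftover'' positions emit draws from $LO^{(i)}$ whose law is identical under $X_0$ and $X_1$, and the placement of users into positions and the identities of the leftover users are — given $C$ — also independent of that choice; so $f$, which receives the pair $(a,b)$ and hence knows $C=a+b-1$, can regenerate the permutation, the leftover users and their outputs. The remaining, genuinely data-dependent part is the $C+1$ ``special'' outputs (the $C$ clones and user~$1$). The naive reconstruction ``send a branch-label $t$ to a fresh sample of $\mathcal{R}^{(i)}(\cdot,x_1^t)$'' fails, since on $X_0$ user~$1$'s output is $\mathcal{R}^{(i)}(\cdot,x_1^0)$ deterministically, not the $\tfrac12$--$\tfrac12$ mixture a clone produces. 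The remedy — and the origin of $\Delta$ — is to introduce corrected per-position distributions $\widetilde{\mathcal{R}}^{(i)}_0,\widetilde{\mathcal{R}}^{(i)}_1$ determined by $\tfrac12\big(\widetilde{\mathcal{R}}^{(i)}_0+\widetilde{\mathcal{R}}^{(i)}_1\big)=\tfrac12\big(\mathcal{R}^{(i)}(\cdot,x_1^0)+\mathcal{R}^{(i)}(\cdot,x_1^1)\big)$ and $\mathcal{R}^{(i)}(\cdot,x_1^0)=\tfrac{e^{\epsilon_0}}{e^{\epsilon_0}+1}\widetilde{\mathcal{R}}^{(i)}_1+\tfrac1{e^{\epsilon_0}+1}\widetilde{\mathcal{R}}^{(i)}_0$, namely $\widetilde{\mathcal{R}}^{(i)}_1=\frac{e^{\epsilon_0}\mathcal{R}^{(i)}(\cdot,x_1^0)-\mathcal{R}^{(i)}(\cdot,x_1^1)}{e^{\epsilon_0}-1}$ and symmetrically $\widetilde{\mathcal{R}}^{(i)}_0=\frac{e^{\epsilon_0}\mathcal{R}^{(i)}(\cdot,x_1^1)-\mathcal{R}^{(i)}(\cdot,x_1^0)}{e^{\epsilon_0}-1}$; these are genuine probability distributions \emph{precisely} because $e^{-\epsilon_0}\le\mathcal{R}^{(i)}(s,x_1^0)/\mathcal{R}^{(i)}(s,x_1^1)\le e^{\epsilon_0}$, and the same pair also satisfies $\mathcal{R}^{(i)}(\cdot,x_1^1)=\tfrac1{e^{\epsilon_0}+1}\widetilde{\mathcal{R}}^{(i)}_1+\tfrac{e^{\epsilon_0}}{e^{\epsilon_0}+1}\widetilde{\mathcal{R}}^{(i)}_0$. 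I would then let $f$ label a uniformly random $a$-subset of the special positions by $1$ (the rest by $0$) and emit $z_i\sim\widetilde{\mathcal{R}}^{(i)}_{\mathrm{label}(i)}$ at each special position $i$; averaging over the labels — fair for each clone, and $\mathrm{Bern}(e^{\epsilon_0}/(e^{\epsilon_0}+1))$ for user~$1$'s position under $X_0$ — reproduces the true law, while the number of label-$1$ special positions equals $A+\Delta$ under $X_0$ and $A+1-\Delta$ under $X_1$, with $\Delta\sim\mathrm{Bern}\big(e^{\epsilon_0}/(e^{\epsilon_0}+1)\big)$.

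\emph{Main obstacle.} The delicate point is the interplay with adaptivity: because $\mathcal{R}^{(i)}$ depends on $z_{1:i-1}$, none of the above can be done ``user by user'' in one shot — the clone decomposition, the re-generation of the leftovers, and the corrected emissions must all be carried out position by position while keeping the real run coupled to the surrogate built by $f$, and making that coupling precise is where the bulk of the work goes. Given that, the $X_1$ identity follows from the $X_0$ one under the relabelling $x_1^0\leftrightarrow x_1^1$, which swaps $\Delta\leftrightarrow1-\Delta$ and hence the two arguments of $f$.
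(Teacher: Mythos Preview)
The paper does not supply its own proof of this proposition: it is stated with attribution to Feldman et al.\ \cite{feldman2022hiding} and used as a black box (``The starting point of this paper is Proposition~\ref{Pro:DP2Multi}\ldots''). There is therefore nothing in the paper to compare your argument against beyond the citation itself.

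That said, your proposal is a faithful reconstruction of the Feldman--McMillan--Talwar ``hiding among the clones'' argument the paper is invoking. The three ingredients you isolate---(i) the mixture decomposition $\mathcal{R}^{(i)}(\cdot,x)=\tfrac{p}{2}\mathcal{R}^{(i)}(\cdot,x_1^0)+\tfrac{p}{2}\mathcal{R}^{(i)}(\cdot,x_1^1)+(1-p)LO^{(i)}$ with $p=e^{-\epsilon_0}$ valid by the LDP likelihood-ratio bound, (ii) the count $C\sim\mathrm{Bin}(n-1,p)$ of clones and $A\mid C\sim\mathrm{Bin}(C,1/2)$ for the branch split, and (iii) the re-parametrization of the two reference distributions via $\widetilde{\mathcal{R}}_0^{(i)},\widetilde{\mathcal{R}}_1^{(i)}$ so that user~$1$'s contribution becomes a $\mathrm{Bern}\big(\tfrac{e^{\epsilon_0}}{e^{\epsilon_0}+1}\big)$ coin rather than a deterministic branch---are exactly the pillars of the cited proof, and your algebra for $\widetilde{\mathcal{R}}_t^{(i)}$ checks out (nonnegativity is precisely the $\epsilon_0$-LDP ratio constraint). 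You also correctly flag that the coupling must be maintained position by position because of adaptivity; this is indeed where the formal write-up in \cite{feldman2022hiding} spends its effort, and your sketch would need to be expanded into an explicit inductive/sequential construction of $f$ to be complete, but the plan is sound and on the same track as the source.
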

The starting point of this paper is Proposition \ref{Pro:DP2Multi}, which transforms the original problem into a simpler task of analyzing a non-adaptive protocol.  It is worth noting that Proposition \ref{Pro:DP2Multi} mentions the joint distribution of $A$ and $C$, which corresponds to the multinomial distribution $Multinom(n-1;p/2,p/2,1-p)$. Here, $A$ and $C-A$ represent the number of 0s and 1s, respectively.
\section{Privacy amplification by shuffling based on multinomial distribution }\label{sec-2}

\subsection{The Exact RDP Bound  for the Shuffle Model}
In order to provide our tighter exact closed-form bound, we employ a combination of maximum likelihood and hypothesis testing methods, leveraging their strengths to derive a more accurate and precise result.

\begin{theorem}\label{thm::Exact}
Let $P=(A,C-A+\Delta)$ and $Q=(A+\Delta,C-A)$, where $p=e^{-\epsilon_0}, C \sim Bin(n-1,p), A \sim Bin(C,1/2), \Delta \sim Bern(\frac{1}{1+p})$.
Then  $$
D^\lambda(P\|Q) = \frac{1}{\lambda-1}
\log \int_0^1 |h'(x)|^{1-\lambda}dx.
$$
Here, $h(\alpha)$ can be obtained as follows.
\begin{equation*}\label{eqn:f}
\small
\begin{aligned}
h(\alpha) = 1-\alpha-c_0\sum\limits_{v=0}^{n-1}\mathbb{P}\left(B_1(v)< A \le B_2(v) |C=v \right)\mathbb{P}(C=v),
\end{aligned}
\end{equation*}
where
{\small $$\alpha(t) = \mathbb{P}(\Delta=0)\mathbb{P}(\frac{A}{C-A+1}>t)+\mathbb{P}(\Delta=1)\mathbb{P}(\frac{A+1}{C-A}>t),$$}
  $g(\alpha) = \inf \limits_t \{t:\alpha(t) \le \alpha\},$ $c_0 = \frac{e^{-\epsilon_0}-1}{e^{-\epsilon_0}+1}$, $B_1(v) = \frac{g(\alpha)v-1}{g(\alpha)+1},
B_2(v) = \frac{g(\alpha)v-1}{g(\alpha)+1}+1$, $\mathbb{P}(C=v)=\binom{n-1}{v} e^{-v\epsilon_0}(1-e^{-\epsilon_0})^{n-1-v}, v=0,1,\cdots,n-1$, 
$P(A=k|C=v)=\binom{v}{k}(\frac{1}{2})^k$, $k=0,1,\cdots,v.$
\end{theorem}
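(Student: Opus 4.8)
The plan is to prove the identity in two steps: (i) a general reduction expressing the R\e nyi divergence of any two distributions through their Neyman--Pearson trade-off function, and (ii) an explicit evaluation of that trade-off function for the pair $(P,Q)$ in the statement.

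For step (i), let $h$ be the trade-off function of $P$ against $Q$, that is, for each level $\alpha\in[0,1]$, $h(\alpha)$ is the least Type II error attainable when testing $H_0:P$ versus $H_1:Q$. By the Neyman--Pearson lemma the optimal tests reject when the likelihood ratio $\mathrm{d}Q/\mathrm{d}P$ exceeds a threshold $t$, with randomization on the boundary; writing $\alpha(t)=\mathbb{P}_{P}(\mathrm{d}Q/\mathrm{d}P>t)$ for the Type I error of this test, the standard fact that the slope of a trade-off curve equals minus the likelihood ratio gives $-h'(\alpha)=\mathrm{d}Q/\mathrm{d}P$ at the boundary matching level $\alpha$, equivalently $\mathrm{d}P/\mathrm{d}Q=1/|h'(\alpha)|$ there. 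Since $\mathrm{e}^{(\lambda-1)D^{\lambda}(P\|Q)}=\mathbb{E}_{x\sim P}\big[(\mathrm{d}P/\mathrm{d}Q)^{\lambda-1}\big]$, and the increment $\mathrm{d}\alpha$ coincides, under the substitution $t\mapsto\alpha$, with the law of the statistic $\mathrm{d}Q/\mathrm{d}P$ under $P$, this expectation becomes $\int_0^1 |h'(\alpha)|^{1-\lambda}\,\mathrm{d}\alpha$, which is the claimed identity. Because $P$ and $Q$ are lattice-valued the likelihood ratio is atomic, so the argument must be run with randomized tests; these make $h$ piecewise linear with slopes exactly the atoms of the likelihood ratio, so the integral is finite and well defined.

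For step (ii) we exploit the structure of $P$ and $Q$. Conditioning on $C=v$ and on $\Delta$, the $\Delta=0$ components of $P$ and $Q$ agree, whereas on the $\Delta=1$ component $P$ adds the extra unit to the second coordinate and $Q$ to the first; equivalently, $Q$ is $P$ with its two coordinates swapped, so one is really detecting a one-step shift of $\mathrm{Bin}(v,1/2)$ on the $\Delta=1$ part of the mass. From $\binom{v}{k}=\binom{v-1}{k}+\binom{v-1}{k-1}$ one checks that, at each fixed level $v$ of the clone count $C$, the likelihood ratio is monotone in $A$, so the Neyman--Pearson rejection region is, level by level, an interval of $A$-values; matching the global threshold $t$ to the level identifies its endpoints as $B_1(v)$ and $B_2(v)=B_1(v)+1$, the unit width encoding the single $A$-value per level on which the test is randomized, while $\alpha(t)$ and $g(\alpha)=\inf\{t:\alpha(t)\le\alpha\}$ record the threshold-to-level dictionary; the two ratios $\tfrac{A}{C-A+1}$ and $\tfrac{A+1}{C-A}$ appearing in $\alpha(t)$ are precisely the contributions of the two values $C=a+b$ and $C=a+b-1$ with which an observed pair $(a,b)$ is consistent. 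Finally, writing the Type II error as $1-\mathbb{P}_{Q}(\text{reject})$, subtracting $\alpha=\mathbb{P}_{P}(\text{reject})$, and using $\sum_{z}(P(z)-Q(z))=0$, the difference $h(\alpha)-(1-\alpha)$ collapses to the signed $(P-Q)$-mass of the rejection region; since $P-Q$ is supported, level by level, on exactly the randomized $A$-values and carries there the constant $c_0=\mathbb{P}(\Delta=0)-\mathbb{P}(\Delta=1)$, this yields the stated correction term $-c_0\sum_{v}\mathbb{P}(B_1(v)<A\le B_2(v)\mid C=v)\,\mathbb{P}(C=v)$.

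The conceptual content is in step (i); the effort is in step (ii). The delicate point there is that an observed pair is compatible with two values of $C$, so a single likelihood-ratio threshold has to be unfolded level by level in $C$, and one must track exactly which $A$-value is randomized at each level so that $\alpha$ sweeps all of $[0,1]$ and $h$ emerges with precisely the slopes forced by the likelihood-ratio atoms. Getting this bookkeeping right, rather than any single clever idea, is the principal obstacle.
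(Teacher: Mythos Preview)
Your overall strategy matches the paper's: derive the Neyman--Pearson trade-off function $h$ for $(P,Q)$ and then pass to R\'enyi divergence via the integral identity $D^\lambda(P\|Q)=\frac{1}{\lambda-1}\log\int_0^1|h'|^{1-\lambda}$. The paper does not re-derive that identity (your step~(i)); it simply quotes it as a lemma from Dong et al., so your sketch of step~(i) is extra but fine.

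Step~(ii) is where you diverge from the paper and where there is a genuine gap. The paper's argument is a direct computation: it writes $\alpha(t)$ and $\beta(t)$ by conditioning on $\Delta$, adds them, and obtains $\alpha+\beta=1-c_0\sum_v\mathbb{P}(B_1(v)<A\le B_2(v)\mid C=v)\,\mathbb{P}(C=v)$ by straightforward algebra; the constant $c_0=\mathbb{P}(\Delta=0)-\mathbb{P}(\Delta=1)$ falls out as the coefficient on $\mathbb{P}(\frac{A}{C-A+1}>t)-\mathbb{P}(\frac{A+1}{C-A}>t)$ after regrouping the four cross terms. The two ratios $\frac{A}{C-A+1}$ and $\frac{A+1}{C-A}$ are the values of the test statistic on the $\Delta=0$ and $\Delta=1$ branches of the mixture, not ``the two values of $C$ with which an observed pair is consistent'' as you write. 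More seriously, your claim that ``$P-Q$ is supported, level by level, on exactly the randomized $A$-values and carries there the constant $c_0$'' is false: $P-Q$ is nonzero at every lattice point. What actually happens is that the $(P-Q)$-mass of a one-sided region \emph{telescopes}, at each fixed level of $C$, to a single boundary term of width one; that telescoping, not a restricted support, is what produces the clean formula. Your $(P-Q)$-mass framing can be made to work, but as written it rests on an incorrect premise, so step~(ii) does not go through without repair.
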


Theorem \ref{thm::Exact} gives exact RDP bound for the shuffled output, however, since RDP is symmetric, we also need to provide result of 
$D^\lambda(Q\|P)$ in a similar way. Without causing any ambiguity, we use notations
$P$ and $Q$ to represent the same value in Theorem \ref{thm::Exact} in the following context. In fact, we can obtain the R\e nyi divergence between two multinomial distributions by numerical computation \cite{feldman2022hiding}.

\begin{corollary}\label{coro:ExactRDP}
The $\epsilon_0$-LDP shuffled adaptive process satisfies $(\lambda, \max\{D^\lambda (P\|Q), D^\lambda(Q\|P)\}$-RDP, where $P,Q$ and $D^\lambda (\cdot\|\cdot)$ are defined in Theorem \ref{thm::Exact}.
\end{corollary}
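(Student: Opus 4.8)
The plan is to reduce the two-sided RDP guarantee for the shuffled adaptive process to the divergence computation already established for the pair of distributions $(P,Q)$ in Theorem \ref{thm::Exact}, and then invoke the definition of RDP directly. First I would recall from Proposition \ref{Pro:DP2Multi} that for any pair of neighbouring datasets $X_0, X_1$ satisfying the stated blanket/clone decomposition, the output $\mathcal{A}_{R,S}(X_0)$ is distributed as $f(A+\Delta, C-A+1-\Delta)$ and $\mathcal{A}_{R,S}(X_1)$ as $f(A+1-\Delta, C-A+\Delta)$ for a common randomized post-processing map $f$. The key observation is that these two arguments of $f$ are exactly the distributions $P$ and $Q$ of Theorem \ref{thm::Exact} (up to the relabelling of which coordinate carries the extra $+1$, and using $\Delta \sim \mathrm{Bern}(1/(1+p))$ versus $\mathrm{Bern}(e^{\epsilon_0}/(e^{\epsilon_0}+1))$, which coincide since $p = e^{-\epsilon_0}$). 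Hence $\mathcal{A}_{R,S}(X_0) \sim f(P)$ and $\mathcal{A}_{R,S}(X_1) \sim f(Q)$.

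Next I would apply the data-processing inequality for R\'enyi divergence: since $f$ is a (randomized) post-processing channel, $D^\lambda(f(P)\|f(Q)) \le D^\lambda(P\|Q)$ and symmetrically $D^\lambda(f(Q)\|f(P)) \le D^\lambda(Q\|P)$. Therefore
\begin{equation*}
\max\{D^\lambda(\mathcal{A}_{R,S}(X_0)\|\mathcal{A}_{R,S}(X_1)),\, D^\lambda(\mathcal{A}_{R,S}(X_1)\|\mathcal{A}_{R,S}(X_0))\} \le \max\{D^\lambda(P\|Q),\, D^\lambda(Q\|P)\}.
\end{equation*}
Because the RDP definition requires the bound to hold for \emph{all} neighbouring pairs in both directions, and because the worst case over neighbouring pairs is attained (or dominated) by the clone structure isolated in Proposition \ref{Pro:DP2Multi}, taking the maximum over all such pairs still leaves the right-hand side as the stated quantity. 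This yields $(\lambda, \max\{D^\lambda(P\|Q), D^\lambda(Q\|P)\})$-RDP.

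The one technical point that needs care — and the step I expect to be the main obstacle — is justifying that Proposition \ref{Pro:DP2Multi}'s reduction genuinely captures the worst-case neighbouring pair for a general $\epsilon_0$-LDP adaptive process, i.e.\ that every neighbouring pair admits the required mixture decomposition with $p = e^{-\epsilon_0}$, or at least that any pair not of this form is dominated in R\'enyi divergence. This is the standard ``$\epsilon_0$-LDP randomizers are $e^{-\epsilon_0}$-blanketed'' argument: for an $\epsilon_0$-LDP mechanism every output distribution is pointwise within a factor $e^{\epsilon_0}$ of every other, so one can always extract a common component of mass $p = e^{-\epsilon_0}$ symmetric in $x_1^0, x_1^1$ plus a leftover, matching the hypothesis of Proposition \ref{Pro:DP2Multi}. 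I would state this explicitly (or cite \cite{feldman2022hiding} for it), then combine it with the data-processing bound above to conclude. Everything else is a direct unwinding of definitions.
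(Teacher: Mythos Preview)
Your proposal is correct and follows essentially the same route as the paper. The paper does not give Corollary~\ref{coro:ExactRDP} a separate proof; it treats it as immediate from Proposition~\ref{Pro:DP2Multi} (the Feldman et al.\ reduction to the pair $(P,Q)$), post-processing, and the one-sided divergence computed in Theorem~\ref{thm::Exact} together with the remark that one must also compute $D^\lambda(Q\|P)$ by symmetry --- exactly the three ingredients you spell out, including the blanket decomposition $p=e^{-\epsilon_0}$ that you correctly flag and attribute to \cite{feldman2022hiding}.
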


\subsection{The Asymptotic RDP Bound for the Shuffle Model}
Although Corollary \ref{coro:ExactRDP} provides a RDP bound for the shuffle model, it is excessively complicated and lacks intuitive understanding. In the following context, we provide an asymptotic RDP bound. This approach not only simplifies computations but also facilitates the extension to other privacy definitions based on the relationship between divergences and Gaussian Differential Privacy (GDP) \cite{dong2022gaussian}.
\begin{lemma}\label{MultiCLT1}
Assume $\xi$ = $(n_0,n_1,n_2)'$ is a random variable which obeys multinomial distribution with parameters $(n-1;\frac{p}{2},\frac{p}{2},1-p)$, then $\xi$ approximately follows the multivariate normal distribution
$N(\tilde{\pmb{\mu}},\tilde{\pmb{\Sigma}})$ as $n \rightarrow \infty$,  where $\tilde{\pmb{\mu}}=\left(\frac{(n-1)p}{2},\frac{(n-1)p}{2},(n-1)(1-p)\right)'$ and covariance matrix of $\xi$
is
$$\tilde{\pmb{\Sigma}} = (n-1)\left(
               \begin{array}{ccc}
                 \frac{p}{2}(1-\frac{p}{2}) & -\frac{p^2}{4} & -\frac{p(1-p)}{2} \\
                 -\frac{p^2}{4} & \frac{p}{2}(1-\frac{p}{2}) & -\frac{p(1-p)}{2} \\
                 -\frac{p(1-p)}{2} & -\frac{p(1-p)}{2} & p(1-p) \\
               \end{array}
             \right).
$$
\end{lemma}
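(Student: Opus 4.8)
The plan is to establish the multivariate Central Limit Theorem for the multinomial vector $\xi$ by viewing it as a sum of i.i.d. random vectors and then applying the classical multivariate Lindeberg--L\'evy CLT. First I would write $\xi = \sum_{j=1}^{n-1} Y_j$, where each $Y_j \in \{e_1,e_2,e_3\}$ is the indicator vector recording which of the three categories (with probabilities $p/2$, $p/2$, $1-p$) the $j$-th trial falls into. The $Y_j$ are i.i.d. with $\mathbb{E}[Y_j] = (p/2,\,p/2,\,1-p)'$, so by linearity $\mathbb{E}[\xi] = (n-1)(p/2,\,p/2,\,1-p)' = \tilde{\pmb{\mu}}$.

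Next I would compute the per-trial covariance matrix $\pmb{\Sigma}_0 = \mathrm{Cov}(Y_1)$. For a single categorical draw, $\mathrm{Var}((Y_1)_k) = q_k(1-q_k)$ and $\mathrm{Cov}((Y_1)_k,(Y_1)_\ell) = -q_k q_\ell$ for $k\neq\ell$, with $(q_1,q_2,q_3) = (p/2,\,p/2,\,1-p)$; substituting these values reproduces exactly the entries inside the matrix displayed in the statement. Since the $Y_j$ are independent, $\mathrm{Cov}(\xi) = (n-1)\pmb{\Sigma}_0 = \tilde{\pmb{\Sigma}}$, matching the claimed covariance.

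Finally, because the $Y_j$ are i.i.d. with finite (indeed bounded) second moments, the multivariate CLT gives $\tilde{\pmb{\Sigma}}^{-1/2}(\xi - \tilde{\pmb{\mu}}) \xrightarrow{d} N(0, I)$ as $n\to\infty$, which is precisely the assertion that $\xi$ is approximately $N(\tilde{\pmb{\mu}}, \tilde{\pmb{\Sigma}})$ for large $n$. One technical caveat worth a remark: the covariance matrix $\pmb{\Sigma}_0$ is singular (its rows sum to zero, reflecting the linear constraint $n_0+n_1+n_2 = n-1$), so the limit is a degenerate Gaussian supported on the hyperplane $\{y : y_1+y_2+y_3 = 0\}$ after centering; the statement should be read in this degenerate sense, or equivalently one may drop the redundant third coordinate and apply the CLT to the non-degenerate $2$-dimensional marginal $(n_0,n_1)$ and recover $n_2 = (n-1)-n_0-n_1$ deterministically. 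The main (and only real) obstacle is handling this degeneracy cleanly in the write-up; the moment computations themselves are entirely routine.
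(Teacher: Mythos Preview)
Your argument is correct and complete: representing $\xi$ as the sum of $n-1$ i.i.d.\ categorical indicator vectors, reading off the mean and covariance entrywise, and invoking the multivariate Lindeberg--L\'evy CLT is the textbook route, and your remark on the singular covariance (with the fix of passing to the two-dimensional marginal $(n_0,n_1)$) is exactly the right way to handle the degeneracy.

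The paper takes a different, more hands-on path: rather than appeal to the CLT as a black box, it computes the joint characteristic function of the multinomial explicitly as $(p_0 e^{it_0}+p_1 e^{it_1}+p_2 e^{it_2})^{n-1}$, centers and rescales by $\sqrt{n-1}$, Taylor-expands each exponential to second order, and shows that the logarithm of the resulting characteristic function converges to the quadratic form $-\tfrac{1}{2}t'\tilde{\pmb{\Sigma}}t$, which identifies the limit as $N(\pmb 0,\tilde{\pmb{\Sigma}})$ via L\'evy's continuity theorem. In effect the paper is rederiving the CLT from scratch in this special case. Your approach is shorter and relies only on a standard theorem; the paper's approach is more self-contained and sidesteps the degeneracy issue automatically (the limiting characteristic function is simply that of a singular Gaussian, with no need to project). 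Either is perfectly acceptable here, and yours is arguably the cleaner write-up.
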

Lemma \ref{MultiCLT1} provides the asymptotic normality of the multinomial distribution, which is closely related to GDP. The Berry-Esseen type central limit theorem \cite{pinelis2016optimal} ensures that the theoretical convergence rate is at most $O\left(\frac{1}{\sqrt{n}}\right)$ while the numerical calculation confirms a convergence rate of approximately $O\left(\frac{1}{n}\right)$, which gives an upper bound.
\begin{theorem}\label{thm:asym}
For a domain $\mathcal{D}$, the shuffled $\epsilon_0$-LDP adaptive process approximately satisfies 
$\frac{2e^{\epsilon_0/2}}{\sqrt{n-1}}$-GDP.
\end{theorem}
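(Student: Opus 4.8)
\emph{Proof plan.}
The idea is to carry the reduction of Proposition~\ref{Pro:DP2Multi} through the central limit theorem of Lemma~\ref{MultiCLT1} and then read the Gaussian-DP parameter off the Mahalanobis distance between the two limiting normals. First I would invoke Proposition~\ref{Pro:DP2Multi}: since GDP is preserved under post-processing, it is enough to bound the trade-off function of the test between the two pre-$f$ distributions $(A+\Delta,\,C-A+1-\Delta)$ and $(A+1-\Delta,\,C-A+\Delta)$ (the non-adaptive test also underlying Theorem~\ref{thm::Exact}). Writing $n_0=A$, $n_1=C-A$, $n_2=n-1-C$, both vectors have coordinate sum $C+1$, and $C\sim\mathrm{Bin}(n-1,p)$ has the same law under either hypothesis; the hypotheses differ only in whether one extra unit falls in bin~$1$ with probability $\mathbb{E}\Delta=\tfrac{e^{\epsilon_0}}{e^{\epsilon_0}+1}$ (first hypothesis) or with probability $\tfrac{1}{e^{\epsilon_0}+1}$ (second hypothesis), on top of a $\mathrm{Multinom}(n-1;\tfrac p2,\tfrac p2,1-p)$ allocation of the other $n-1$ units.

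Next I would apply Lemma~\ref{MultiCLT1} to replace $(n_0,n_1,n_2)$ by its Gaussian limit; restricting to the two free coordinates $(n_0,n_1)$ yields a nondegenerate limit $N(\pmb{\mu},\pmb{\Sigma}_2)$, with $\pmb{\mu}=(\tfrac{(n-1)p}{2},\tfrac{(n-1)p}{2})$ and $\pmb{\Sigma}_2$ the upper-left $2\times2$ block of $\tilde{\pmb{\Sigma}}$. The lone extra unit is an independent Bernoulli displacement along the Euclidean direction $(1,-1)$: it adds only an $O(1)$ term to the covariance, negligible against the $\Theta(n)$ entries of $\pmb{\Sigma}_2$, while shifting the mean by $(\mathbb{E}\Delta,\,1-\mathbb{E}\Delta)$. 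Hence, as $n\to\infty$, the two hypotheses are indistinguishable from $N(\pmb{\mu}_0,\pmb{\Sigma}_2)$ and $N(\pmb{\mu}_1,\pmb{\Sigma}_2)$ with common covariance and mean gap $\pmb{\mu}_1-\pmb{\mu}_0=(1-2\mathbb{E}\Delta)(1,-1)$, of magnitude $|c_0|=\tfrac{e^{\epsilon_0}-1}{e^{\epsilon_0}+1}$ in the direction $(1,-1)$.

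Then I would invoke the standard identity (Dong et al.~\cite{dong2022gaussian}) that $T\big(N(\pmb{\mu}_0,\pmb{\Sigma}),N(\pmb{\mu}_1,\pmb{\Sigma})\big)=G_\mu$, where $G_\mu:=T(N(0,1),N(\mu,1))$ and $\mu=\sqrt{(\pmb{\mu}_1-\pmb{\mu}_0)^{\top}\pmb{\Sigma}^{-1}(\pmb{\mu}_1-\pmb{\mu}_0)}$; so the shuffled process is asymptotically $\mu$-GDP, and because $G_\mu$ is symmetric this also covers $D^\lambda(Q\|P)$ without a separate argument. A short computation gives $(1,-1)^{\top}\pmb{\Sigma}_2^{-1}(1,-1)=\tfrac{4}{(n-1)p}$, hence $\mu=\tfrac{2|c_0|}{\sqrt{(n-1)p}}=\tfrac{2|c_0|\,e^{\epsilon_0/2}}{\sqrt{n-1}}$ after substituting $p=e^{-\epsilon_0}$. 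Finally, $|c_0|=\tfrac{e^{\epsilon_0}-1}{e^{\epsilon_0}+1}<1$ for every $\epsilon_0>0$, and $\mu$-GDP implies $\mu'$-GDP whenever $\mu'\ge\mu$; therefore the process approximately satisfies $\tfrac{2e^{\epsilon_0/2}}{\sqrt{n-1}}$-GDP.

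The part that needs real care -- rather than the elementary algebra above -- is making this limiting argument rigorous: one must check that (i) the $O(1)$-variance contribution of the extra Bernoulli unit truly washes out at the $1/\sqrt n$ scale that controls GDP; (ii) the adversary's knowledge of $C$ (recoverable from $n_2$) is treated consistently -- this is exactly why the effective noise level is the conditional variance $\mathbb{E}[C]/4$ rather than the marginal $\mathrm{Var}(A)$, and why mixing the strata over $C$, all carrying essentially the same $\mu$, reproduces $G_\mu$; and (iii) the normal approximation is uniform enough near the relevant rejection regions, for which the Berry--Esseen-type rate quoted after Lemma~\ref{MultiCLT1} ($O(1/\sqrt n)$ in theory, $O(1/n)$ empirically) is the appropriate tool.
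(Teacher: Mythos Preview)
Your proposal is correct and follows the same architecture as the paper's proof: reduce via Proposition~\ref{Pro:DP2Multi}, apply the multinomial CLT of Lemma~\ref{MultiCLT1}, pass to the nondegenerate $2\times 2$ block of $\tilde{\pmb\Sigma}$, and read off the GDP parameter as the Mahalanobis distance between the two limiting normals (the paper packages this last step as its Lemma~\ref{TNorm}). The one substantive difference is the treatment of the Bernoulli $\Delta$. The paper first strips $\Delta$ away by a symmetry argument, asserting $T(P_2,Q_2)=\mathbb P(\Delta{=}0)\,T(P,Q)+\mathbb P(\Delta{=}1)\,T(Q,P)=T(P,Q)$ for the unmixed pair $P=(n_0+1,n_1,n_2)$, $Q=(n_0,n_1+1,n_2)$, and then computes the Mahalanobis distance for the full unit shift $(-1,1)$, obtaining $\mu=\tfrac{2}{\sqrt{(n-1)p}}$ directly. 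You instead fold $\Delta$ into the Gaussian approximation, correctly noting that its $O(1)$ covariance contribution is negligible while the mean shift shrinks to $(1-2\mathbb E\Delta)(1,-1)$; this yields the sharper parameter $|c_0|\cdot\tfrac{2}{\sqrt{(n-1)p}}$, which you then relax via $|c_0|<1$. Your route is arguably cleaner---the paper's mixture identity is really only an inequality in the useful direction (mixing $P$ and $Q$ toward each other can only make them harder to distinguish)---and it exposes the extra slack factor $|c_0|=\tfrac{e^{\epsilon_0}-1}{e^{\epsilon_0}+1}$ that the paper's argument silently discards, though both land on the stated bound.
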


According to the fact that $\mu$-GDP implies $(\lambda, \frac{1}{2}\mu^2\lambda)$-RDP \cite{dong2022gaussian}, we have the asymptotic RDP bound of the shuffled output.
\begin{corollary}\label{coro:aym}
For a domain $\mathcal{D}$, the shuffled $\epsilon_0$-LDP adaptive process approximately satisfies 
$(\lambda, \frac{2e^{\epsilon_0}\lambda}{n-1})$-RDP for any $\lambda \ge 2$.
\end{corollary}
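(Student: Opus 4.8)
The plan is to derive this statement as an immediate consequence of Theorem~\ref{thm:asym} combined with the Gaussian-to-R\'enyi conversion cited from \cite{dong2022gaussian}. First I would recall that Theorem~\ref{thm:asym} asserts that the shuffled $\epsilon_0$-LDP adaptive process approximately satisfies $\mu$-GDP with $\mu = \frac{2e^{\epsilon_0/2}}{\sqrt{n-1}}$. Then I would invoke the fact that a $\mu$-GDP mechanism is $(\lambda,\tfrac12\mu^2\lambda)$-RDP and simply substitute: $\tfrac12\mu^2\lambda = \tfrac12\cdot\frac{4e^{\epsilon_0}}{n-1}\cdot\lambda = \frac{2e^{\epsilon_0}\lambda}{n-1}$, which is exactly the claimed bound. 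No further computation is needed for the headline equality, so the body of the argument is essentially a one-line substitution once Theorem~\ref{thm:asym} is in hand.

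Two points deserve attention before declaring victory. First, because $\mu$-GDP is defined symmetrically (the Gaussian trade-off function $G_\mu$ is its own inverse), the conversion bounds both $D^\lambda(P\|Q)$ and $D^\lambda(Q\|P)$ by $\tfrac12\mu^2\lambda$ simultaneously; hence the $\max\{\cdot,\cdot\}$ appearing in Corollary~\ref{coro:ExactRDP} is automatically controlled and no separate ``reverse direction'' argument is required, in contrast to the exact-bound route. Second, the qualifier ``approximately'' is inherited verbatim from Theorem~\ref{thm:asym}, which rests on the central limit approximation of Lemma~\ref{MultiCLT1}; to make the corollary fully rigorous one must propagate the Berry--Esseen-type error of that approximation through the divergence. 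Since $D^\lambda$ carries a prefactor $\tfrac1{\lambda-1}$, this is precisely where the restriction $\lambda\ge 2$ is used: it keeps $\tfrac1{\lambda-1}\le 1$, so the approximation error does not blow up as $\lambda\to 1^+$ and the resulting guarantee stays in the clean form that is linear in $\lambda$.

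Accordingly, the main obstacle is not the algebra but the uniform control of this approximation error: one needs the Gaussian approximation to $\mathcal{A}_{R,S}$ to be accurate uniformly over $\lambda\ge 2$ (and over the relevant range of $\epsilon_0$ and $n$), so that the RDP guarantee of the limiting Gaussian transfers to the actual shuffled process with only lower-order corrections. Everything else --- plugging $\mu$ into $\tfrac12\mu^2\lambda$ and reading off $\frac{2e^{\epsilon_0}\lambda}{n-1}$ --- is routine, which is why I would present the formal proof as a short corollary of Theorem~\ref{thm:asym} and defer the quantitative error analysis to the discussion surrounding Lemma~\ref{MultiCLT1}.
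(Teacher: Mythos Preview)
Your proposal is correct and matches the paper's own argument exactly: the paper derives Corollary~\ref{coro:aym} in one line by invoking Theorem~\ref{thm:asym} and the fact from \cite{dong2022gaussian} that $\mu$-GDP implies $(\lambda,\tfrac12\mu^2\lambda)$-RDP, then substituting $\mu=\tfrac{2e^{\epsilon_0/2}}{\sqrt{n-1}}$. Your additional remarks on symmetry and on the role of the ``approximately'' qualifier go beyond what the paper states, but the core derivation is identical.
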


\subsection{Comparison of RDP Bounds under the Shuffle Model}
Bounds on RDP for privacy amplification via shuffling were initially introduced by Erlingsson et al. \cite{erlingsson2019amplification}. 
% Girgis et al.
% \cite{girgis2021renyi} improved the bound to $O(\lambda e^{2\epsilon_0}/n)$ and gave a lower bound of $O(\lambda e^{\epsilon_0}/n)$ for all $\epsilon_0 \ge 0$ and all integer $\lambda \ge 2$. Subsequently, Feldman et al. \cite{feldman2023stronger} improved on the results for big $\epsilon_0$ when $\lambda < \frac{n}{16 \epsilon_0 e^{\epsilon_0}}$.
Girgis et al. \cite{girgis2021renyi} gave both an upper bound and a lower bound of RDP for $\epsilon_0 \ge 0$ and any integer $\lambda \ge 2$. That is, the RDP of the shuffled $\epsilon_0$-LDP adaptive process is upper-bounded by
 \begin{equation}
 \epsilon(\lambda) \le \frac{1}{\lambda-1} \log \left( e^{\lambda^2 \frac{(e^{\epsilon_0}-1)^2}{\bar{n}}}+e^{\epsilon_0 \lambda - \frac{n-1}{8 e^{\epsilon_0}}} \right),
 \end{equation}
 where $\bar{n} = \lfloor \frac{n-1}{2e^{\epsilon_0}} \rfloor +1$. And the RDP of the shuffled $\epsilon_0$-LDP adaptive process is lower-bounded by
 \begin{equation}
 \epsilon(\lambda) \ge \frac{1}{\lambda-1} \log \left( 1+\frac{\lambda(\lambda-1)}{2} \frac{(e^{\epsilon_0}-1)^2}{ne^{\epsilon_0}} \right).
 \end{equation}

The exponential term $e^{\epsilon_0 \lambda - \frac{n-1}{8 e^{\epsilon_0}}}$ in the upper bound comes from the Chernoff bound, it goes to $0$ rapidly as $n$ increases. If we omit this term, the upper bound is nearly$ \frac{2}{n-1} \frac{\lambda}{\lambda-1}e^{\epsilon_0}(e^{\epsilon_0}-1)^2,$
which is worse than our simplified bound in Corollary \ref{coro:aym} by a multiplicative factor of $\frac{\lambda}{\lambda-1}(e^{\epsilon_0}-1)^2$.  
Although the RDP bound has been improved to 
$O(\frac{64 e^{\epsilon_0}\lambda}{n})$ \cite{feldman2023stronger}, our proposed RDP bound shows a significantly superior performance.

\begin{figure}[htbp]
	\centering
	\begin{minipage}{0.48\linewidth}
		\centering
		\includegraphics[width = 1.05\linewidth]{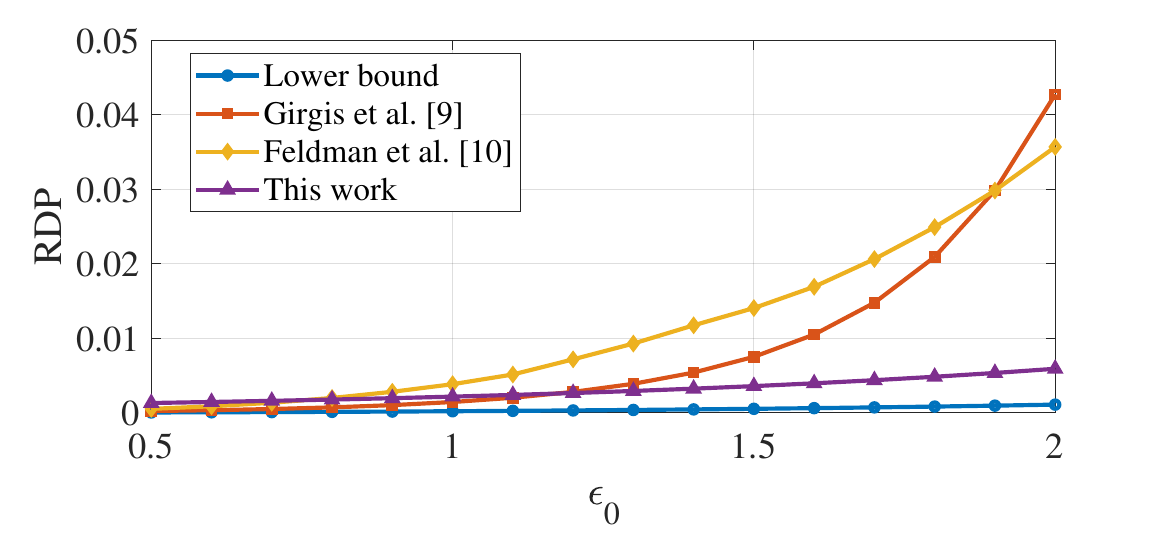}
		\caption{RDP as a function of $\epsilon_0$ for $\lambda=4$ and $n=10^4$}
		\label{Fig:RDP_epsilon}%ÎÄÖÐÒýÓÃ¸ÃÍ¼Æ¬´úºÅ
	\end{minipage}
	\begin{minipage}{0.48\linewidth}
		\centering
		\includegraphics[width = \linewidth]{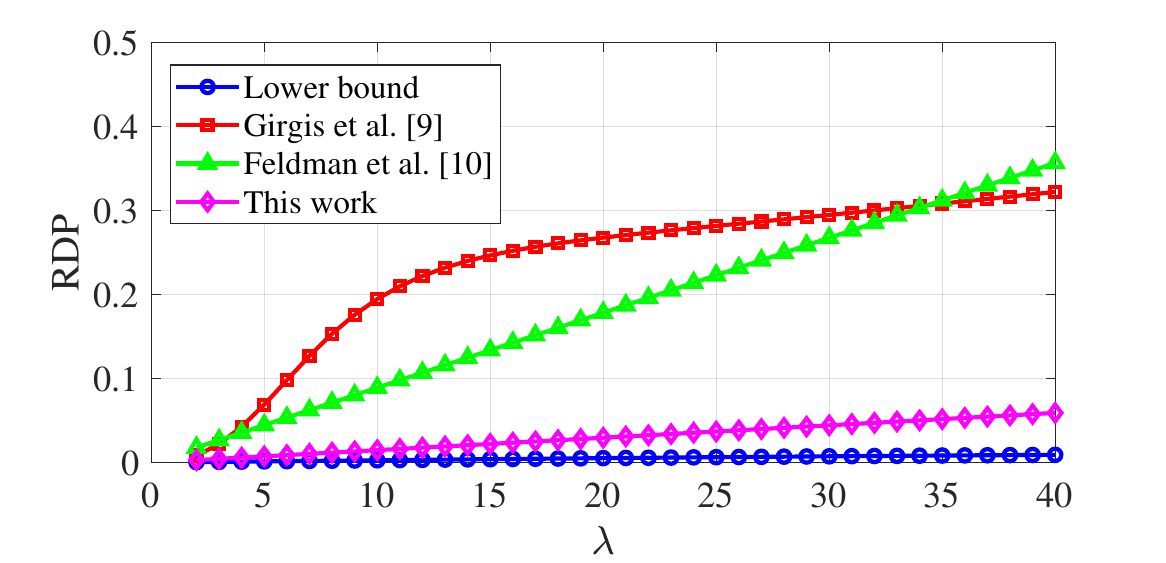}
		\caption{RDP as a function of $\lambda$ for $\epsilon_0=2$ and $n=10^4$}
		\label{Fig:RDP_lambda}%
	\end{minipage}
\end{figure}

Figure \ref{Fig:RDP_epsilon} illustrates that our RDP bound provides a significantly tighter bound compared to the existing work  for a fixed value of $\lambda=4$. Moreover, it demonstrates that for both $\epsilon_0 > 1$ and $\epsilon_0 < 1$, our RDP bound and the lower bound are in close proximity. Similarly, Figure \ref{Fig:RDP_lambda} shows that our RDP bound and the lower bound are close for each $\lambda \geq 2$, with a fixed $\epsilon_0=2$.    %\ref{RDPAndLB-fig}.
\section{Application and Experiments}\label{sec-application}
Stochastic gradient descent (SGD) is a crucial algorithm for empirical risk minimization (ERM), which aims to minimize a parameterized function given by
$\mathcal{L}(\pmb \theta) = \sum_{i=1}^n \ell(\pmb \theta,x_i),$
where $\pmb \theta \in \mathbb{R}^{d}$. 
Several studies have focused on a differentially private variant of Stochastic Gradient Descent (SGD) \cite{bassily2014private,song2013stochastic,girgis2021shuffled}. Additionally, researchers have investigated a deep learning version of SGD tailored for the popular MNIST handwritten digit dataset \cite{abadi2016deep}.

In our experiments, we consider a scenario with $m$ clients, where each client has $n/m$ samples. Our approach, which incorporates Principle Components Analysis (PCA), achieves an impressive accuracy of 98.18\% after approximately 50 epochs in the non-private scenario. This accuracy result is consistent with the findings of a standard neural network  trained on the same MNIST dataset \cite{lecun1998gradient}. By employing this methodology, we can effectively train a simple classifier that achieves high accuracy in recognizing handwritten digits from the MNIST dataset. Details of the experiment setup can be found in Table \ref{tab:Experimentsetup}.

To the best of our knowledge, the best RDP bound for the shuffled noisy SGD with $\epsilon_0$-LDP adaptive process is listed in \cite{girgis2021renyi,feldman2022hiding}, while Figure \ref{Fig:RDP_epsilon} and \ref{Fig:RDP_lambda} show that the privacy bound in this work is tighter.
Furthermore, our technique is based on Laplace mechanism  and can be applied to stochastic gradient descent with batch size $m$. Figure \ref{Fig:DPSGD} demonstrates that our method achieves the closest accuracy (96.41\%) to the true result (98.18\%) when the same total privacy budget (RDP) is applied.
This result is natural because a better RDP bound allows for more precise local computations, leading to more accurate results.
% Table generated by Excel2LaTeX from sheet 'Sheet1'
\begin{table}[htbp]
  \centering
  \small
  \caption{Experiment setting for the shuffled SGD on the MNIST dataset}
    \begin{tabular}{lll}
    \toprule
    \textbf{Parameters/Setting \quad} & \multicolumn{1}{l}{\textbf{Value}} & \textbf{Explanation} \\
    \midrule
    Activation function & ReLU & \\
    Output layer   & Softmax & \\
    Loss function & Cross-entropy & \\
   Input layer & 60 variables & 60 PCA components \\ 
    $C$     & $10$    & Clipping bound \\
    $\epsilon_0$ & \multicolumn{1}{l}{$[0.1,2]$} & Privacy budget \\
    $\eta$ & $0.05$   & Step size \\
    $m$     & $100$   & Batch size \\
    $n$    & $60,000$ & Sample size \\
    $T$    & $50$     & Epoch count \\
    \bottomrule
    \end{tabular}%
  \label{tab:Experimentsetup}%
\end{table}%

\begin{proposition}[Composition theorem of GDP \cite{dong2022gaussian}]\label{pro:GDPcomposition}
 The $k$-fold composition of $\mu_i$-GDP mechanisms is $\sqrt{\mu_1^2 + \cdots +
\mu_k^2}$.   
\end{proposition}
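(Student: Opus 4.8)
The plan is to prove this through the trade-off function (hypothesis-testing) formulation of differential privacy, following Dong, Roth and Su \cite{dong2022gaussian}. Recall that for two probability distributions $P,Q$ the trade-off function $T(P,Q):[0,1]\to[0,1]$ is $T(P,Q)(\alpha)=\inf\{1-\mathbb{E}_Q[\phi]:\mathbb{E}_P[\phi]\le\alpha\}$, the infimum being over all measurable rejection rules $\phi$, and that a mechanism $\mathcal{M}$ is $\mu$-GDP precisely when $T(\mathcal{M}(S),\mathcal{M}(S'))\ge G_\mu:=T(N(0,1),N(\mu,1))$ pointwise on $[0,1]$ for every pair of neighbouring datasets $S\sim S'$. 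Thus the statement decomposes into two ingredients: (i) a composition law saying that if $\mathcal{M}_i$ has trade-off function lower bounded by $f_i$ uniformly over neighbours and over realized auxiliary inputs, then the composed mechanism has trade-off function lower bounded by the tensor product $f_1\otimes\cdots\otimes f_k$; and (ii) the identity $G_{\mu_1}\otimes\cdots\otimes G_{\mu_k}=G_{\sqrt{\mu_1^2+\cdots+\mu_k^2}}$.

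For (i) I would invoke the trade-off-function composition theorem of \cite{dong2022gaussian} as a black box. Here $f\otimes g$ is by definition $T(P_1\times P_2,Q_1\times Q_2)$ when $f=T(P_1,Q_1)$ and $g=T(P_2,Q_2)$, extended to arbitrary convex trade-off functions by a limiting argument; the facts that make the proof go through are that this product depends only on $f$ and $g$, that $\otimes$ is associative, commutative and monotone in each argument, and that these properties let one peel off one round of the (adaptive) composition at a time, exactly matching the adaptive-process structure of Section 2, where each $\mathcal{R}^{(i)}(z_{1:i-1},\cdot)$ is required to satisfy its guarantee uniformly over $z_{1:i-1}$.

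For (ii) it suffices, by associativity of $\otimes$ and induction on $k$, to prove the two-fold identity $G_\mu\otimes G_\nu=G_{\sqrt{\mu^2+\nu^2}}$. By definition $G_\mu\otimes G_\nu=T(N(0,1)\times N(0,1),\,N(\mu,1)\times N(\nu,1))=T(N(\mathbf 0,I_2),N(\mathbf v,I_2))$ with $\mathbf v=(\mu,\nu)^\top$. By the Neyman--Pearson lemma the most powerful tests are thresholdings of the likelihood ratio, which here are the half-spaces $\{\langle\mathbf x,\mathbf v\rangle\ge c\}$; hence the problem depends on the data only through the scalar statistic $\langle\mathbf x,\mathbf v\rangle/\|\mathbf v\|$, which is $N(0,1)$ under the null and $N(\|\mathbf v\|,1)$ under the alternative. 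Therefore $T(N(\mathbf 0,I_2),N(\mathbf v,I_2))=G_{\|\mathbf v\|}=G_{\sqrt{\mu^2+\nu^2}}$. Iterating this $k-1$ times gives (ii), and combining it with (i) and the defining property of $\mu$-GDP yields that the $k$-fold composition is $\sqrt{\mu_1^2+\cdots+\mu_k^2}$-GDP.

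The main obstacle is step (i): the non-adaptive product case is immediate from the definition of $\otimes$, but the fully adaptive statement requires the structural closure properties of trade-off functions under $\otimes$ and careful bookkeeping of the auxiliary inputs, which is why I would cite \cite{dong2022gaussian} rather than reprove it; step (ii) is then an elementary consequence of the rotational symmetry of isotropic Gaussians together with Neyman--Pearson.
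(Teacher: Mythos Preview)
Your proposal is correct, and in fact goes further than the paper: the paper does not prove Proposition~\ref{pro:GDPcomposition} at all but simply imports it from \cite{dong2022gaussian} (it is restated verbatim as Fact~\ref{kfold-comp} in the appendix, again without proof). So there is nothing to compare at the level of argument.

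That said, your step~(ii) is essentially present in the paper, just not packaged as a proof of composition. Lemma~A.4 computes $T(N(\pmb{\mu}_0,\pmb{\Sigma}),N(\pmb{\mu}_1,\pmb{\Sigma}))=\Phi(\Phi^{-1}(1-\alpha)-\sqrt{(\pmb{\mu}_1-\pmb{\mu}_0)'\pmb{\Sigma}^{-1}(\pmb{\mu}_1-\pmb{\mu}_0)})$ via exactly the Neyman--Pearson reduction to a one-dimensional statistic that you sketch; with $\pmb{\Sigma}=I_k$ and $\pmb{\mu}_1-\pmb{\mu}_0=(\mu_1,\dots,\mu_k)^\top$ this is your tensor identity $G_{\mu_1}\otimes\cdots\otimes G_{\mu_k}=G_{\sqrt{\mu_1^2+\cdots+\mu_k^2}}$. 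The paper uses that lemma for a different purpose (the asymptotic analysis in Theorem~\ref{thm:asym}) and never connects it back to composition. Your decomposition into (i) the black-box $f$-DP composition theorem and (ii) the Gaussian tensor identity is the standard route in \cite{dong2022gaussian}, and your caveat that the adaptive case of (i) is the non-trivial part is well placed.
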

\begin{proposition}[Laplace mechanism \cite{dwork2014algorithmic}]\label{LaplacePrivacy}
For a function $g: D \rightarrow \mathbb{R}^d$, let $l_1$ sensitivity be defined as
$\Delta(g) = \max_{D_0 \sim D_1} \|g(D_0)-g(D_1)\|_1$,
then for any $\epsilon_0 >0$, the noisy output $h(D) = g(D)+ Lap(\Delta(g)/\epsilon)$ satisfies $\epsilon_0$-LDP.
\end{proposition}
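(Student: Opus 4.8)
The plan is to bound the ratio of output densities pointwise and then integrate over measurable sets. First I would write down the density of the perturbed output. Since $h(D)=g(D)+Lap(\Delta(g)/\epsilon_0)$ adds independent Laplace noise of scale $b=\Delta(g)/\epsilon_0$ to each of the $d$ coordinates, the density of $h(D)$ at a point $z\in\mathbb{R}^d$ factorizes as
\[
p_D(z)=\prod_{j=1}^d \frac{1}{2b}\exp\!\left(-\frac{|z_j-g(D)_j|}{b}\right).
\]

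Next, for any two neighbouring inputs $D_0\sim D_1$ (in the local model this just means two possible records), I would form the likelihood ratio and simplify it using the product structure:
\[
\frac{p_{D_0}(z)}{p_{D_1}(z)}=\exp\!\left(\frac{1}{b}\sum_{j=1}^d\bigl(|z_j-g(D_1)_j|-|z_j-g(D_0)_j|\bigr)\right).
\]
The key step is the reverse triangle inequality applied coordinate by coordinate, $|z_j-g(D_1)_j|-|z_j-g(D_0)_j|\le |g(D_0)_j-g(D_1)_j|$, which upon summing over $j$ bounds the exponent by $\frac{1}{b}\|g(D_0)-g(D_1)\|_1\le \frac{\Delta(g)}{b}=\epsilon_0$, using the definition of $\ell_1$ sensitivity. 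Hence $p_{D_0}(z)\le e^{\epsilon_0}p_{D_1}(z)$ for every $z$.

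Finally I would integrate this pointwise bound over an arbitrary measurable set $S\subseteq\mathbb{R}^d$: $\mathbb{P}(h(D_0)\in S)=\int_S p_{D_0}(z)\,dz\le e^{\epsilon_0}\int_S p_{D_1}(z)\,dz=e^{\epsilon_0}\,\mathbb{P}(h(D_1)\in S)$, which is exactly the $\epsilon_0$-DP inequality from the Pure Differential Privacy definition; since the mechanism processes a single record, this is precisely $\epsilon_0$-LDP. There is no genuine obstacle in this argument — it is a textbook computation — and the only point deserving care is that the product form of the multivariate Laplace density must be paired with the $\ell_1$ notion of sensitivity, so that the $d$ per-coordinate triangle inequalities aggregate into a single factor $e^{\epsilon_0}$; a different noise distribution or a different norm would break this aggregation.
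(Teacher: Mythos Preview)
Your proof is correct and is exactly the standard textbook argument for the Laplace mechanism. The paper does not provide its own proof of this proposition; it simply cites it as a known result from \cite{dwork2014algorithmic}, so there is nothing to compare against beyond noting that your argument matches the classical one.
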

\begin{theorem}\label{ProSGD1}
Algorithm \ref{alg:SGD} approximately satisfies 
$(\lambda,\frac{2Te^{\epsilon_0}\lambda}{m-1})$-RDP.
%$\frac{2\sqrt{T}}{\sqrt{m-1}} e^{\frac{\epsilon_0}{2}}$-

%Algorithm \ref{alg:SGD} satisfies $F \otimes f_{0,\delta'}$-DP, where $F$ is %$F=\Phi(\Phi^{-1}(1-\alpha)-\frac{2}{\sqrt{m-1}} e^{\frac{\epsilon_0}{2}})$ and %$\delta'=1-(1-(1+\frac{1}{2e^{\epsilon_0}})\delta_0)^m$.
\end{theorem}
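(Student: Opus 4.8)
The plan is to view Algorithm~\ref{alg:SGD} as a $T$-fold adaptive composition in which each iteration (epoch) consists of a single shuffled batch of $m$ clients, and then to combine the per-iteration guarantee coming from the asymptotic analysis of Section~\ref{sec-2} with a composition theorem. By construction of the algorithm, within one iteration every participating client clips its gradient so that its $\ell_1$-norm is at most the clipping bound $C$ (hence the $\ell_1$-sensitivity $\Delta(g)$ of a client's clipped contribution is at most $2C$), adds independent coordinatewise Laplace noise $\mathrm{Lap}(\Delta(g)/\epsilon_0)$, and the $m$ resulting noisy reports are passed through a uniform permutation. By Proposition~\ref{LaplacePrivacy} the local randomizer is $\epsilon_0$-LDP, so each iteration is exactly an $\epsilon_0$-LDP shuffled adaptive process on a dataset of $m$ records.

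First I would instantiate Theorem~\ref{thm:asym} (equivalently Corollary~\ref{coro:aym}) with the sample size $n$ replaced by the batch size $m$: a single iteration approximately satisfies $\frac{2e^{\epsilon_0/2}}{\sqrt{m-1}}$-GDP, and hence $(\lambda,\frac{2e^{\epsilon_0}\lambda}{m-1})$-RDP for every $\lambda\ge 2$. Since the gradient used at iteration $t$ depends on the iterates released in iterations $1,\dots,t-1$, the $T$ iterations form an \emph{adaptive} sequence; but GDP and RDP are both closed under adaptive composition, so this dependence causes no difficulty.

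Next I would compose over the $T$ iterations. By the GDP composition rule (Proposition~\ref{pro:GDPcomposition}), the $T$-fold composition of $\frac{2e^{\epsilon_0/2}}{\sqrt{m-1}}$-GDP mechanisms is $\sqrt{T}\cdot\frac{2e^{\epsilon_0/2}}{\sqrt{m-1}}=\frac{2e^{\epsilon_0/2}\sqrt{T}}{\sqrt{m-1}}$-GDP. Converting back via the implication that $\mu$-GDP yields $(\lambda,\tfrac12\mu^2\lambda)$-RDP~\cite{dong2022gaussian} gives $\tfrac12\cdot\frac{4e^{\epsilon_0}T}{m-1}\cdot\lambda=\frac{2Te^{\epsilon_0}\lambda}{m-1}$, which is precisely the claimed bound. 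Alternatively one can skip GDP and add the per-iteration RDP guarantees of Corollary~\ref{coro:aym} directly, since RDP at a fixed order is additive under composition, reaching the same expression.

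The main obstacle — and the source of the qualifier ``approximately'' — is making the accumulation of the central-limit error precise: Theorem~\ref{thm:asym} replaces the exact multinomial law of Theorem~\ref{thm::Exact} by its Gaussian approximation at the Berry--Esseen rate $O(1/\sqrt{m})$ (empirically $O(1/m)$), and one must check that composing $T$ such approximations inflates the error only by a factor polynomial in $T$ rather than catastrophically. Everything else — the $\epsilon_0$-LDP of the Laplace step, the reduction of one iteration to a shuffled $\epsilon_0$-LDP process of size $m$, and the additivity of RDP/GDP under adaptive composition — is routine given the results already established, with the only standing hypothesis being $\lambda\ge 2$ inherited from Corollary~\ref{coro:aym}.
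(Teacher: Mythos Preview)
Your proposal is correct and follows essentially the same route as the paper: establish that each local randomizer is $\epsilon_0$-LDP via the Laplace mechanism, invoke Theorem~\ref{thm:asym} with $m$ in place of $n$ to get $\frac{2e^{\epsilon_0/2}}{\sqrt{m-1}}$-GDP per epoch, compose the $T$ epochs via Proposition~\ref{pro:GDPcomposition}, and convert the resulting $\frac{2\sqrt{T}e^{\epsilon_0/2}}{\sqrt{m-1}}$-GDP to RDP. One small correction: in Algorithm~\ref{alg:SGD} the aggregated gradient is divided by $m$ (line~12), so the $\ell_1$-sensitivity of $\mathcal{R}^{(i)}(z_{1:i-1},\cdot)$ is $\frac{2C}{m}$ rather than $2C$, which matches the noise scale $\frac{2C}{\epsilon_0 m}$ in line~7 and yields the $\epsilon_0$-LDP guarantee.
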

\begin{proof}
In each epoch, the algorithm is consisted of two main steps splitting and shuffling, let
% \begin{equation}
% \mathcal{R}^{(i)}(z_{1:i-1},D_{\pi(i)}) = \tilde{\pmb \theta}_i = 
% \tilde{\pmb \theta}_{i-1}(z_{1:i-1})-\eta_i (\nabla \ell(\tilde{\pmb \theta}_{i-1}(z_{1:i-1}),D_{\pi(i)})+\pmb b_i),
% \end{equation}
\begin{align*}
&\mathcal{R}^{(i)}(z_{1:i-1},D_{\pi(i)}) \\
=& \tilde{\pmb \theta}_i=\tilde{\pmb \theta}_{i-1}(z_{1:i-1})-\eta_i (\nabla \ell(\tilde{\pmb \theta}_{i-1}(z_{1:i-1}),D_{\pi(i)})+\pmb b_i) ,
\end{align*}
then the output of Algorithm \ref{alg:SGD} can be seen as post processing of the shuffled $m$ blocks. Since $l_1$ sensitivity of each $\mathcal{R}^{(i)}(z_{1:i-1},\cdot)$ is  $\frac{2C}{m}$, then it is $\epsilon_0$-LDP  according to Proposition \ref{LaplacePrivacy}. Combined with Theorem \ref{thm:asym} and  Proposition \ref{pro:GDPcomposition}, Algorithm \ref{alg:SGD} approximately satisfies $\frac{2\sqrt{T}}{\sqrt{m-1}} e^{\frac{\epsilon_0}{2}}$-GDP. Since  $\mu$-GDP implies $(\lambda, \frac{1}{2}\mu^2\lambda)$-RDP, the proof is completed.
\end{proof}
\begin{algorithm}[htbp]
    \caption{The Shuffled noisy SGD for $\epsilon_0$-LDP users}
    \label{alg:SGD}
    \begin{algorithmic}[1]
        \REQUIRE $X = (x_1,\ldots,x_n)$, $\mathcal{L}(\pmb \theta,x)$, $\pmb \theta_0$, $\eta$, $T$, $\epsilon_0$, $d$, $m$, $C$
        \ENSURE $\hat{\pmb \theta}_{T,m}$   %%output
        \STATE Split $[n]$ into $m$ disjoint subsets $S_1, \cdots, S_m$ with equal size $n/m$
        \STATE Choose arbitrary initial point $\hat{\pmb \theta}_{0,m}$
        \FOR{each $t \in [T]$}
            \STATE $\tilde{\pmb\theta}_0 = \hat{\pmb \theta}_{t-1,m}$
            \STATE Choose a random permutation $\pi$ of $[m]$
            \FOR{each $i \in [m]$}
                \STATE $\pmb b_i \sim Lap(0,\frac{2C}{\epsilon_0 m} \pmb{I}_d)$
                \FOR{each $j \in S_{\pi(i)}$}
                    \STATE $\pmb g_i^{j} = \nabla \ell(\tilde{\pmb \theta}_{i-1},x_j)$
                    \STATE $\tilde{\pmb g}_i^j = \pmb g_i^j/\max(1,\| \pmb g_i^j \|_1/C)$
                \ENDFOR
                \STATE $\tilde{\pmb \theta}_{i} = \tilde{\pmb \theta}_{i-1} - \eta (\frac{1}{m} \sum_j \tilde{\pmb g}_i^j+\pmb b_i)$
            \ENDFOR
            \STATE $\hat{\pmb \theta}_{t,m} = \tilde{\pmb \theta}_m$
        \ENDFOR
        \RETURN $\hat{\pmb \theta}_{T,m}$
    \end{algorithmic}
\end{algorithm}

\begin{figure}[htbp]
	\centering
		\includegraphics[scale=0.3]{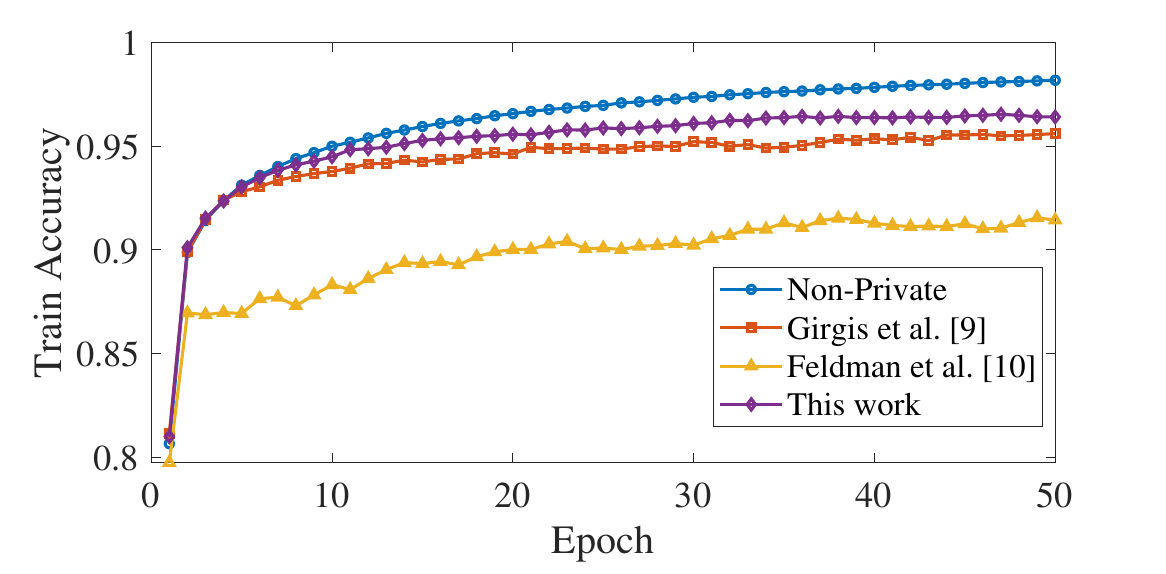}
		\caption{Comparison of train accuracy at the same privacy level of $(\lambda,0.008\lambda$)-RDP. Non-Private refers to the standard SGD without adding noise. }
		\label{Fig:DPSGD}
\end{figure}
\section{Conclusion}
In conclusion, this study focused on R\e nyi Differential Privacy in the shuffle model for privacy protection. We provide an analysis of RDP without any constraints on the privacy budget for the first time. Furthermore, a more efficient DP-SGD algorithm based on RDP was proposed, outperforming existing methods within the same privacy budget. 
% -------------------------------------------------------------------------
\newpage
\bibliographystyle{IEEEbib}
\bibliography{refs}
\newpage
\setcounter{section}{0} % 设置附录的起始章节编号为7
\renewcommand{\thesection}{\Alph{section}}
\section{appendix}
\setcounter{theorem}{0} % 重置定理计数器
\setcounter{lemma}{0} % 重置定理计数器
\setcounter{definition}{0} % 重置定理计数器
\setcounter{fact}{0} % 重置定理计数器
\renewcommand{\thetheorem}{\Alph{section}.\arabic{theorem}} % 使用A.1形式的标号
\renewcommand{\thelemma}{\Alph{section}.\arabic{lemma}} % 使用A.1形式的标号
\renewcommand{\thedefinition}{\Alph{section}.\arabic{definition}}
\renewcommand{\thefact}{\Alph{section}.\arabic{fact}}% 使用A.1形式的标号
First, we introduce a powerful tool called $f$-DP   based on hypothesis testing . Readers can refer to the reference \cite{dong2022gaussian} for more detailed information. 

Let $U$ and $V$ denote the probability distributions of $M(D_0)$ and $M(D_1)$, respectively. We consider a rejection rule $0 \leq \phi \leq 1$, with type I and type II error rates defined as
\begin{equation}\label{TypeError}
\alpha_\phi = \mathbb{E}_U[\phi], \quad \beta_{\phi} = 1-\mathbb{E}_V[\phi].
\end{equation}
%It is well-known that
%$$
%\alpha_\phi+\beta_\phi \geq 1 - TV(U,V),
%$$
%where $TV(U,V)$ is the supremum of $|U(A)-V(A)|$ over all measurable sets $A$. To characterize the fine-grained trade-off between the two errors, the following definition is necessary.
\begin{definition}(Trade-off function)
For any two probability distributions $U$ and $V$ on the same space $\Omega$, the trade-off function $T(U,V):[0,1]\rightarrow [0,1]$ is defined as
$$
T(U,V)(\alpha) = \inf\{\beta_\phi: \alpha_\phi \leq \alpha \},
$$
where the infimum is taken over all measurable rejection rules $\phi$, and $\alpha_{\phi}=\mathbb{E}_U(\phi)$ and $\beta_{\phi}=1-\mathbb{E}_V(\phi)$.
\end{definition}
\begin{definition}($f$-differential privacy, $f$-DP)
Let $f$ be a trade-off function, a mechanism $M$ is said to be $f$-differentially private if
\begin{equation}\label{TF}
T(M(D_0),M(D_1))(\alpha) \ge f(\alpha),
\end{equation}
for all neighboring data sets $D_0$ and $D_1$ and $0 \le \alpha \le 1$. Under the assumption of no ambiguity, we assume that the trade-off function is a function of $\alpha$ and denote the abbreviated form of equation (\ref{TF}) as $T(M(D_0),M(D_1)) \ge f$.
\end{definition}

\begin{lemma} \cite{dong2022gaussian}\label{f2Renyi}
    If a mechanism $M$ is $f$-DP and $\inf \{x\in [0,1]:f(x)=0 \}=1$, then it is $(\lambda, \ell_\lambda^{R\acute{e}nyi}(f))$-RDP for any $\lambda>1$.
    Here, $\ell_\lambda^{R\acute{e}nyi}(f) = \frac{1}{\lambda-1}\log \int^1_0 |f'(x)|^{1-\lambda} dx$. 
\end{lemma}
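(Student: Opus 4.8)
The plan is to reduce the statement to a single ``extremal'' pair of distributions on $[0,1]$ and then transport an arbitrary pair back to it by data processing. First I would fix neighbouring datasets $D_0\sim D_1$ and write $U=M(D_0)$, $V=M(D_1)$, so that $f$-DP gives $T(U,V)\ge f$. Since $T(U,V)$ is itself a trade-off function it vanishes at $1$, and because $T(U,V)\ge f>0$ on $[0,1)$ the hypothesis $\inf\{x:f(x)=0\}=1$ forces $\inf\{x:T(U,V)(x)=0\}=1$; by Neyman--Pearson this is precisely the statement $U\ll V$, so $D^\lambda(U\|V)$ is well defined and it remains to bound it by $\frac{1}{\lambda-1}\log\int_0^1|f'(x)|^{1-\lambda}\,dx$.

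Next I would build the canonical realisation of $f$. As $f$ is convex and non-increasing, $f'$ exists a.e.\ and $-f'\ge 0$ is non-increasing; the hypothesis forbids $f\equiv 0$ near $1$, hence $-f'(x)>0$ for all $x<1$, which is exactly what makes the construction absolutely continuous in the right direction. On $[0,1]$ (adjoined with one extra atom $\ast$ when $f(0)<1$) take $P_f=\mathrm{Unif}[0,1]$ and let $Q_f$ have density $-f'(x)$ on $[0,1]$ together with mass $1-\int_0^1(-f')=1-f(0)$ at $\ast$. A routine Neyman--Pearson computation should give $T(P_f,Q_f)=f$: the most powerful level-$\alpha$ test thresholds the likelihood ratio $dQ_f/dP_f=-f'$, hence rejects an initial segment of $[0,1]$ of length $\alpha$, and its type-II error works out to $f(\alpha)$. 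Since $dP_f/dQ_f=1/(-f'(x))$ on $[0,1]$ and $P_f$ is supported there,
\[
D^\lambda(P_f\|Q_f)=\frac{1}{\lambda-1}\log\int_0^1 (-f'(x))^{-\lambda}\,(-f'(x))\,dx=\frac{1}{\lambda-1}\log\int_0^1 (-f'(x))^{1-\lambda}\,dx=\ell_\lambda^{R\acute{e}nyi}(f).
\]

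To finish, I would invoke Blackwell's comparison theorem for dichotomies: from $T(U,V)\ge f=T(P_f,Q_f)$ it follows that the binary experiment $(P_f,Q_f)$ is more informative than $(U,V)$, so there is a Markov kernel $K$ (a post-processing) with $KP_f=U$ and $KQ_f=V$; the data-processing inequality for R\'enyi divergence then gives $D^\lambda(U\|V)=D^\lambda(KP_f\|KQ_f)\le D^\lambda(P_f\|Q_f)=\ell_\lambda^{R\acute{e}nyi}(f)$, and since $D_0\sim D_1$ were arbitrary the claim follows. I expect the main obstacle to be precisely this Blackwell step: one needs the \emph{exact} randomisation criterion (dominance of trade-off functions yields an honest kernel, not an approximation) together with the mild topological hypotheses under which it holds. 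A route that sidesteps it is to prove the identity $D^\lambda(U\|V)=\ell_\lambda^{R\acute{e}nyi}\big(T(U,V)\big)$ for every pair with $U\ll V$ --- using that along the trade-off curve $\frac{d\beta}{d\alpha}$ equals minus the likelihood-ratio threshold, so the substitution $\alpha=\mathbb{P}_U[\,dV/dU>\eta\,]$ converts $\int_0^1|f'|^{1-\lambda}$ into $\mathbb{E}_V[(dU/dV)^\lambda]$ --- and then to show $\ell_\lambda^{R\acute{e}nyi}$ is antitone by a stochastic-dominance comparison of the ``slope measures'' $-f'$ and $-g'$ whenever $f\le g$; the first of these two ideas is essentially the computation already carried out in Theorem \ref{thm::Exact}.
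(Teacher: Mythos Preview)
The paper does not prove this lemma at all: it is quoted verbatim from \cite{dong2022gaussian} and used as a black box (only in the last line of the proof of Theorem~\ref{thm::Exact}). There is therefore nothing in the present paper to compare your argument against.

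That said, your plan is exactly the argument given in the cited reference. Dong--Roth--Su construct the canonical pair $(P_f,Q_f)$ with $P_f=\mathrm{Unif}[0,1]$ and $dQ_f/dP_f=-f'$ (plus an atom carrying mass $1-f(0)$), verify $T(P_f,Q_f)=f$, and then invoke Blackwell's sufficiency theorem to conclude that $T(U,V)\ge f$ implies the existence of a Markov kernel $K$ with $KP_f=U$, $KQ_f=V$; the bound then follows from the data-processing inequality for R\'enyi divergence together with the direct computation $D^\lambda(P_f\|Q_f)=\ell_\lambda^{\text{R\'enyi}}(f)$. Your identification of the Blackwell step as the crux is accurate; in \cite{dong2022gaussian} it is stated as a proposition and proved (in the needed generality) by reduction to the classical two-point randomisation criterion. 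Your proposed alternative route---proving $D^\lambda(U\|V)=\ell_\lambda^{\text{R\'enyi}}(T(U,V))$ and then antitonicity of $\ell_\lambda^{\text{R\'enyi}}$---is also viable and avoids Blackwell, though the antitonicity step still hides essentially the same convex-ordering content. One small remark: if $f$ has a flat piece (so $-f'=0$ on a set of positive measure) your absolute-continuity claim $P_f\ll Q_f$ fails, but in that case $\int_0^1|f'|^{1-\lambda}=+\infty$ and the conclusion is vacuous, so no harm is done.
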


% \begin{definition}(Tensor product)
% The tensor product of two trade-off functions $f = T(U, V)$ and $g =T(U',V')$ is
% defined as
% $$ f \otimes g := T(U \otimes V, U' \otimes V').$$
% \end{definition}

%\begin{definition}(Total variance)
%For two distribution $U$ and $V$, the total variance distance $TV(U,V)$ is the supremum of $|U(A)-V(A)|$ over all measurable sets $A$.
%\end{definition}
Here are some useful facts about $f$-DP and details can be found in \cite{dong2022gaussian} .
% \begin{fact}\label{factfunc}
% $(\epsilon_0,\delta_0)$-DP is equivalent to $f_{\epsilon_0,\delta_0}$-DP, where
% $$f_{\epsilon_0,\delta_0}(\alpha) = \max\{0,1-\delta_0-e^{\epsilon_0}\alpha,e^{-\epsilon_0}(1-\delta_0-\alpha) \}.$$

% \end{fact}

% \begin{fact1}\label{factotimes}
% Tensor product of two trade-off functions has commutative and associative properties, and $f_{\epsilon_1,\delta_1} \otimes \cdots \otimes f_{\epsilon_n,\delta_n} = (f_{\epsilon_0,0},\otimes,\cdots,\otimes f_{\epsilon_n,0})\otimes (f_{0,\delta_1}\otimes\cdots\otimes f_{0,\delta_n})$, especially, $f_{\epsilon,\delta} = f_{\epsilon,0}\otimes f_{0,\delta}$.
% \end{fact1}
\begin{fact1}\label{fact:postprocess}
If a mechanism $M$ is $f$-DP, then its post-processing $Proc \circ M$ is also $f$-DP.
\end{fact1}
% \begin{fact1}\label{factdelta}
% \begin{equation}
% f \otimes f_{0,\delta} =\left\{
% \begin{aligned}
% (1-\delta)f(\frac{\alpha}{1-\delta}) &, & 0 \le \alpha \le 1-\delta, \\
% 0 & , & 1-\delta \le \alpha \le 1.
% \end{aligned}
% \right.
% \end{equation}
% Especially, $f_{0,\delta_1}\otimes f_{0,\delta_2} = f_{0,1-(1-\delta_1)(1-\delta_2)}$ and $f_{0,\delta_0}^{\otimes n} = f_{1-(1-\delta_0)^n}$, where $h^{\otimes k}$
% denotes the $k$-fold iterative composition of a function $h$.
% \end{fact1}
\begin{fact1}\label{muexpand}
%For a symmetric trade-off function $f$, a mechanism is $f$-DP if and only if it is $(\epsilon,\delta(\epsilon))$-DP for all $\epsilon \ge 0$ with $\delta(\epsilon) = 1+f^*(-e^{-\epsilon})$, where $f^*(y)=\mathop{sup}\limits_{-\infty<x<\infty}yx-f(x)$.
A $f$-DP mechanism is called $\mu$-GDP if $f$ can be obtained by $f=T(N(0,1),N(\mu,1))=\Phi(\Phi^{-1}(1-\alpha)-\mu)$. In other words,
A mechanism is $\mu$-GDP if and only if it is $(\epsilon,\delta(\epsilon))$-DP for all $\epsilon \ge 0$, where $$
\delta(\epsilon)=\Phi(-\frac{\epsilon}{\mu}+\frac{\mu}{2})
-e^{\epsilon}\Phi(-\frac{\epsilon}{\mu}-\frac{\mu}{2}),
$$
where $\Phi(\cdot)$ is cumulative distribution function of standard normal distribution $N(0,1)$.
\end{fact1}
\begin{fact1}\label{kfold-comp}
The $k$-fold composition of $\mu_i$-GDP mechanisms is $\sqrt{\mu_1^2 + \cdots +
\mu_k^2}$.
\end{fact1}

\begin{lemma}\label{Compdist}
Given observation $(a,b)$, the likelihood function between $H_0: P=(A,C-A+1)$ and $H_1: Q = (A+1,C-A)$ is $\Lambda = \frac{\mathcal{L}_{H_1}}{\mathcal{L}_{H_0}} = \frac{\mathbb{P}(Q=(a,b))}{\mathbb{P}(P=(a,b))} =
\frac{a}{b}$, where $C \sim Bin(n-1,p), A \sim Bin(C,1/2)$.
\end{lemma}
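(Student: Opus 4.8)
The plan is a direct computation of the two point masses followed by a cancellation, so the ``main obstacle'' is really just careful bookkeeping of the support rather than any substantive difficulty. The key observation is that each of the events $\{P=(a,b)\}$ and $\{Q=(a,b)\}$ pins down the value of $C$: the two coordinates of $P=(A,C-A+1)$ always sum to $C+1$, and likewise the coordinates of $Q=(A+1,C-A)$ sum to $C+1$, so in both cases observing $(a,b)$ forces $C=a+b-1$. Unpacking the definitions, $\{P=(a,b)\}=\{A=a,\ C=a+b-1\}$ while $\{Q=(a,b)\}=\{A=a-1,\ C=a+b-1\}$.

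Next I would expand both probabilities via the chain rule $\mathbb{P}(A=k,C=v)=\mathbb{P}(C=v)\,\mathbb{P}(A=k\mid C=v)$, using $C\sim Bin(n-1,p)$ and $A\mid C=v\sim Bin(v,1/2)$. This yields
\[
\mathbb{P}(P=(a,b))=\mathbb{P}(C=a+b-1)\binom{a+b-1}{a}\Big(\tfrac12\Big)^{a+b-1},
\]
\[
\mathbb{P}(Q=(a,b))=\mathbb{P}(C=a+b-1)\binom{a+b-1}{a-1}\Big(\tfrac12\Big)^{a+b-1}.
\]
Forming the likelihood ratio, the common factor $\mathbb{P}(C=a+b-1)\,(1/2)^{a+b-1}$ cancels, leaving $\Lambda=\binom{a+b-1}{a-1}\big/\binom{a+b-1}{a}$. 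Writing both coefficients with factorials, $\binom{a+b-1}{a-1}=\frac{(a+b-1)!}{(a-1)!\,b!}$ and $\binom{a+b-1}{a}=\frac{(a+b-1)!}{a!\,(b-1)!}$, the ratio collapses to $\frac{a!\,(b-1)!}{(a-1)!\,b!}=\frac{a}{b}$, which is exactly the claimed value.

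The only point deserving a sentence of care is the range of validity. For integers $a,b\ge 1$ with $a+b-1\le n-1$, both masses are strictly positive and the identity $\Lambda=a/b$ holds verbatim; the degenerate boundary cases remain consistent with the formula under the usual conventions ($a=0$ forces $\mathbb{P}(Q=(a,b))=0$, so $\Lambda=0=a/b$, and $b=0$ forces $\mathbb{P}(P=(a,b))=0$, so $\Lambda=+\infty$, matching $a/b$). I would note this and conclude; no estimates, asymptotics, or limiting arguments are required anywhere in the proof.
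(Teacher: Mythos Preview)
Your proof is correct and follows essentially the same route as the paper: both identify $C=a+b-1$ from the observed pair, factor through $\mathbb{P}(C=a+b-1)\,\mathbb{P}(A=\cdot\mid C=a+b-1)$, and reduce the likelihood ratio to $\binom{a+b-1}{a-1}/\binom{a+b-1}{a}=a/b$. Your version is simply more explicit about the bookkeeping and the boundary cases.
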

\begin{proof}
Since $C \sim Bin(n-1,p)$, we can obtain that $\mathbb{P}(C=k)=\binom{n-1}{k}p^k(1-p)^{n-1-k}, k=0,1,\cdots,n-1$. Then
$\mathbb{P}(Q=(a,b))=\mathbb{P}(C=a+b-1) \mathbb{P}(A=a-1|C=a+b-1) = \mathbb{P}(C=a+b-1) \mathbb{P}(A=b|C=a+b-1)\\
 = \frac{a}{b}\mathbb{P}(P=(a,b)),$
 which indicates that $\Lambda = \frac{\mathcal{L}_{H_1}}{\mathcal{L}_{H_0}}=\frac{a}{b}$.
\end{proof}

\begin{lemma}\label{Neyman}(Neyman-Pearson lemma)
For a hypothesis test between null hypothesis $H_0$:The sample distribution is $P$ and alternative hypothesis $H_1$:The sample distribution is $Q$, let $\Lambda = \frac{\mathcal{L}_{H_1}}{\mathcal{L}_{H_0}}$, then
the optimal reject region for $\omega \in \Omega$(sample space) is defined as follows:
\begin{equation}\label{Neyman}
\phi(\omega)=\left\{
\begin{aligned}
1 & , & \Lambda > t, \\
0 & , & \Lambda \le t.
\end{aligned}
\right.
\end{equation}
\end{lemma}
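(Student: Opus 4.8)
The plan is to prove the lemma in its operational sense: the likelihood-ratio rule $\phi$ is a \emph{most powerful} test, meaning that among all rejection rules $\phi'$ whose type I error does not exceed that of $\phi$, the rule $\phi$ attains the smallest type II error. In the language of this paper, this certifies that $\phi$ realizes the infimum defining the trade-off function $T(P,Q)$, so the region $\{\Lambda > t\}$ is indeed optimal. Throughout I identify the null distribution with $P$ (so $\alpha_\phi = \mathbb{E}_P[\phi]$) and the alternative with $Q$ (so $\beta_\phi = 1 - \mathbb{E}_Q[\phi]$), matching the error definitions in \eqref{TypeError}.

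First I would fix the threshold $t \ge 0$, let $\alpha_\phi = \mathbb{E}_P[\phi]$ be the type I error of the likelihood-ratio test, and take any competing rule $0 \le \phi' \le 1$ with $\alpha_{\phi'} = \mathbb{E}_P[\phi'] \le \alpha_\phi$. The engine of the proof is the pointwise inequality
\[
\bigl(\phi(\omega) - \phi'(\omega)\bigr)\bigl(\mathcal{L}_{H_1}(\omega) - t\,\mathcal{L}_{H_0}(\omega)\bigr) \ge 0 \quad \text{for every } \omega \in \Omega .
\]
I would verify it by the two-case split dictated by the definition of $\phi$ in \eqref{Neyman}: when $\Lambda(\omega) > t$ we have $\mathcal{L}_{H_1} - t\mathcal{L}_{H_0} > 0$ and $\phi = 1 \ge \phi'$, so both factors are nonnegative; when $\Lambda(\omega) \le t$ we have $\mathcal{L}_{H_1} - t\mathcal{L}_{H_0} \le 0$ and $\phi = 0 \le \phi'$, so both factors are nonpositive. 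In either case the product is nonnegative.

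Next I would integrate this inequality over $\Omega$ against the base measure and expand, using $\int \phi\,\mathcal{L}_{H_1} = \mathbb{E}_Q[\phi]$ and $\int \phi\,\mathcal{L}_{H_0} = \mathbb{E}_P[\phi]$, to obtain
\[
\mathbb{E}_Q[\phi] - \mathbb{E}_Q[\phi'] \ge t\,\bigl(\mathbb{E}_P[\phi] - \mathbb{E}_P[\phi']\bigr).
\]
Since $t \ge 0$ and $\mathbb{E}_P[\phi] - \mathbb{E}_P[\phi'] = \alpha_\phi - \alpha_{\phi'} \ge 0$ by the assumed constraint, the right-hand side is nonnegative, so $\mathbb{E}_Q[\phi] \ge \mathbb{E}_Q[\phi']$. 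Translating through $\beta = 1 - \mathbb{E}_Q[\cdot]$ gives $\beta_\phi \le \beta_{\phi'}$, that is, no rule with equal or smaller type I error can beat $\phi$ on type II error. Hence $\phi$ attains the trade-off infimum and its rejection region is $\{\Lambda > t\}$, as claimed.

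The step I expect to require the most care is the boundary set $\{\Lambda = t\}$. In the discrete multinomial setting relevant to Theorem~\ref{thm::Exact}, the likelihood ratio $\Lambda = a/b$ from Lemma~\ref{Compdist} takes only finitely many values, so a deterministic rule of the displayed form realizes only a discrete set of type I levels; attaining an arbitrary prescribed $\alpha$, and thereby tracing the exact trade-off curve, generally requires randomizing $\phi$ on $\{\Lambda = t\}$. The pointwise inequality above continues to hold for such a randomized $\phi$ with its value taken in $[0,1]$ on the boundary, so the argument extends unchanged; I would nonetheless flag this randomization explicitly to keep the subsequent characterization of the trade-off function, and hence the RDP computation, exact.
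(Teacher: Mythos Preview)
The paper does not actually prove this lemma; it is stated as the classical Neyman--Pearson lemma and used as a black box in the proofs of Lemma~\ref{TNorm} and Lemma~\ref{lem:nodelta}. Your argument is the standard textbook proof via the pointwise inequality $(\phi-\phi')(\mathcal{L}_{H_1}-t\mathcal{L}_{H_0})\ge 0$ followed by integration, and it is correct; your remark on randomization at the boundary $\{\Lambda=t\}$ is also appropriate given the discrete likelihood ratios that arise in Lemma~\ref{Compdist}.
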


\begin{lemma}\label{TNorm}
 Assume $\pmb{\mu}_0$,$\pmb{\mu}_1 \in \mathbb{R}^d$, $\pmb{I}_d \in \mathbb{R}^{d\times d}$ is an identity matrix and $\pmb{\Sigma} \in \mathbb{R}^{d \times d}$ is a positive definite matrix, then
 $$T(N(\pmb{0},\pmb{I}_d),N(\pmb{\mu},\pmb{I}_d)) = \Phi(\Phi^{-1}(1-\alpha)-\sqrt{\pmb{\mu}'\pmb{\mu}}), $$
 and
  \begin{align*}
& T(N(\pmb{\mu}_0,\pmb{\Sigma}),N(\pmb{\mu}_1,\pmb{\Sigma}))  \\
& = \Phi(\Phi^{-1}(1-\alpha)-\sqrt{(\pmb{\mu}_1-\pmb{\mu}_0)'\pmb{\Sigma}^{-1}(\pmb{\mu}_1-\pmb{\mu}_0)}),
\end{align*}   where $\alpha \in [0,1]$ represents type I error.
\end{lemma}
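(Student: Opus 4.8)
The plan is to obtain both identities from the Neyman--Pearson lemma (Lemma~\ref{Neyman}): first reduce the general covariance case to the spherical case by an affine change of variables, and then reduce the spherical case to a one--dimensional Gaussian test by passing to a linear statistic.

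\textbf{Step 1 (reduce to identity covariance).} Since $\pmb{\Sigma}$ is positive definite it has an invertible symmetric square root $\pmb{\Sigma}^{1/2}$, so the affine map $\Psi(\pmb{x})=\pmb{\Sigma}^{-1/2}(\pmb{x}-\pmb{\mu}_0)$ is a bijection of $\mathbb{R}^d$ pushing $N(\pmb{\mu}_0,\pmb{\Sigma})$ to $N(\pmb{0},\pmb{I}_d)$ and $N(\pmb{\mu}_1,\pmb{\Sigma})$ to $N(\pmb{\mu},\pmb{I}_d)$ with $\pmb{\mu}=\pmb{\Sigma}^{-1/2}(\pmb{\mu}_1-\pmb{\mu}_0)$. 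Applying the post-processing (data-processing) property of trade-off functions (Fact~\ref{fact:postprocess}) along $\Psi$ gives $T(N(\pmb{0},\pmb{I}_d),N(\pmb{\mu},\pmb{I}_d))\ge T(N(\pmb{\mu}_0,\pmb{\Sigma}),N(\pmb{\mu}_1,\pmb{\Sigma}))$, and applying it along $\Psi^{-1}$ gives the reverse inequality, hence equality. Since $\pmb{\mu}'\pmb{\mu}=(\pmb{\mu}_1-\pmb{\mu}_0)'\pmb{\Sigma}^{-1}(\pmb{\mu}_1-\pmb{\mu}_0)$, the second identity will follow from the first once we establish $T(N(\pmb{0},\pmb{I}_d),N(\pmb{\mu},\pmb{I}_d))(\alpha)=\Phi(\Phi^{-1}(1-\alpha)-\|\pmb{\mu}\|)$.

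\textbf{Step 2 (spherical case via a linear statistic).} If $\pmb{\mu}=\pmb{0}$ the claim is $1-\alpha=\Phi(\Phi^{-1}(1-\alpha))$, which is trivial, so assume $\pmb{\mu}\neq\pmb{0}$ and set $\mu=\|\pmb{\mu}\|>0$. The likelihood ratio between $H_1:N(\pmb{\mu},\pmb{I}_d)$ and $H_0:N(\pmb{0},\pmb{I}_d)$ at $\pmb{x}$ equals $\exp\!\big(\pmb{\mu}'\pmb{x}-\tfrac12\|\pmb{\mu}\|^2\big)$, a strictly increasing function of the one-dimensional statistic $S=\pmb{\mu}'\pmb{X}$. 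Hence by Lemma~\ref{Neyman} the most powerful level-$\alpha$ test rejects exactly when $S>t$ for an appropriate threshold $t$, and since $S$ is continuously distributed no boundary randomization is needed, so every level $\alpha\in[0,1]$ is attained exactly. Under $H_0$, $S\sim N(0,\mu^2)$ and under $H_1$, $S\sim N(\mu^2,\mu^2)$. Requiring $\alpha_\phi=\mathbb{P}_{H_0}(S>t)=1-\Phi(t/\mu)=\alpha$ forces $t=\mu\,\Phi^{-1}(1-\alpha)$, and then
\begin{align*}
\beta_\phi=\mathbb{P}_{H_1}(S\le t)=\Phi\!\left(\frac{t-\mu^2}{\mu}\right)=\Phi\big(\Phi^{-1}(1-\alpha)-\mu\big).
\end{align*}
This optimal test realizes the infimum defining $T$, giving $T(N(\pmb{0},\pmb{I}_d),N(\pmb{\mu},\pmb{I}_d))(\alpha)=\Phi(\Phi^{-1}(1-\alpha)-\sqrt{\pmb{\mu}'\pmb{\mu}})$; combining with Step~1 yields the general formula.

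\textbf{Expected main obstacle.} The computations above are routine; the only point requiring care is the invariance of the trade-off function under the change of variables in Step~1 --- obtaining an \emph{equality} rather than a one-sided inequality relies on using post-processing along both $\Psi$ and its inverse --- together with the observation in Step~2 that a randomized rejection rule cannot outperform the deterministic likelihood-ratio test, which is exactly the content of the Neyman--Pearson lemma and is what lets us claim the stated value of $T$ rather than merely a bound.
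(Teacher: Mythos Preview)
Your proof is correct and follows essentially the same approach as the paper: both arguments compute the likelihood ratio in the spherical case, use the Neyman--Pearson lemma (Lemma~\ref{Neyman}) to reduce to the linear statistic $\pmb{\mu}'\pmb{x}$, and then handle the general covariance case via the affine map $\pmb{x}\mapsto\pmb{\Sigma}^{-1/2}(\pmb{x}-\pmb{\mu}_0)$. Your write-up is somewhat more explicit about why the affine change of variables yields an \emph{equality} of trade-off functions (invoking post-processing in both directions) and about the trivial $\pmb{\mu}=\pmb{0}$ case, but the underlying argument is the same.
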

\begin{proof}
 Notice that likelihood function between $H_0: N(\pmb{0},\pmb{I}_d)$ and $H_1: N(\pmb{\mu},\pmb{I}_d)$ is
\begin{equation}
\frac{L_{H_1}}{L_{H_0}}=\frac{\prod\limits_{j=1}^d(2\pi)^{\frac{d}{2}}e^{-\frac{1}{2}(\pmb{x}-\pmb{\mu})'(x-\pmb{\mu})}}{\prod\limits_{j=1}^d(2\pi)^{\frac{d}{2}}e^{-\frac{1}{2}\pmb{x}'\pmb{x}}}
=e^{-\frac{1}{2}\pmb{\mu}'\pmb{\mu}+\pmb{\mu}'\pmb{x}},
\end{equation}
according to Lemma A.3, the reject rule is defined as follows:
\begin{equation}
\phi(\omega)=\left\{
\begin{aligned}
1 & , & \pmb{\mu}'\pmb{x} > t, \\
0 & , & \pmb{\mu}'\pmb{x} \le t.
\end{aligned}
\right.
\end{equation}
Hence, type I error
$\alpha(t)=\mathbb{P}(\pmb{\mu}'\pmb{x}>t)$, where $x \sim N(0,\pmb{I}_d)$, which indicates that $\pmb{\mu'}\pmb{x} \sim N(0,\pmb{\mu}'\pmb{\mu})$.
Therefore,
\begin{equation}
\alpha(t)=\mathbb{P}(\frac{\pmb{\mu}'\pmb{x}}{\sqrt{\pmb{\mu}'\pmb{\mu}}}>\frac{t}{\sqrt{\pmb{\mu}'\pmb{\mu}}}) = 1- \Phi(\frac{t}{\sqrt{\pmb{\mu}'\pmb{\mu}}}),
\end{equation}
which indicates that $t = \sqrt{\pmb{\mu}'\pmb{\mu}}\Phi^{-1}(1-\alpha)$. In addition, we can calculate type II error
\begin{equation}\label{eqn:f}
\begin{aligned}
\beta(t) & = \mathbb{P}(\pmb{\mu}'(\pmb{x}+\pmb{\mu})\le t)  \\
& = \mathbb{P}(\frac{\pmb{\mu}'\pmb{x}}{\sqrt{{\pmb{\mu}'\pmb{\mu}}}}\le \frac{t}{\sqrt{\pmb{\mu}'\pmb{\mu}}}-\sqrt{\pmb{\mu}'\pmb{\mu}}), \\
& = \Phi(\frac{t}{\sqrt{\pmb{\mu}'\pmb{\mu}}}-\sqrt{\pmb{\mu}'\pmb{\mu}})=\Phi(\Phi^{-1}(1-\alpha)-\sqrt{\pmb{\mu}'\pmb{\mu}}).
\end{aligned}
\end{equation}
Based on simple calculation, we can find that
\begin{align*}
& T(N(\pmb{\mu}_0,\pmb{\Sigma}),N(\pmb{\mu}_1,\pmb{\Sigma}))  \\
& = T(N(\pmb{0},\pmb{I}_d),N(\pmb{\Sigma}^{-1/2}(\pmb{\mu}_1-\pmb{\mu}_0),\pmb{I}_d)  \\
& = \Phi(\Phi^{-1}(1-\alpha)-\sqrt{(\pmb{\mu}_1-\pmb{\mu}_0)'\pmb{\Sigma}^{-1}(\pmb{\mu}_1-\pmb{\mu}_0)}).
\end{align*}   
\end{proof}
When $d=1$, we have $T(N(0,1),N(\mu,1)) = \Phi(\Phi^{-1}(1-\alpha)-\mu)$, which is consistent with the conclusion in Fact \ref{muexpand}.

\subsection{Proof of Theorem 1}
\begin{lemma}\label{lem:nodelta}
Let $P=(A,C-A+1)$ and $Q=(A,C-A)$, where $p=e^{-\epsilon_0}, C \sim Bin(n-1,p), A \sim Bin(C,1/2),$
then  
$$T(P,Q) \ge h.$$
Here, $\alpha(t), \beta(t)$ and $h(\alpha)$ can be obtained as follows.
\begin{equation*}
\alpha(t) = \sum \limits_{v=0}^{n-1}\mathbb{P}(A>\frac{tv+t}{tv-1})\mathbb{P}(C=v),
\end{equation*}
\begin{equation*}
g(\alpha) = \inf \limits_t \{t:\alpha(t) \le \alpha\},
\end{equation*}
\begin{equation*}
\beta(t)=h(\alpha) = 1-\alpha-\sum\limits_{v=1}^{n-1}\mathbb{P}(A = \lceil\frac{g(\alpha)v-1}{g(\alpha)+1}\rceil)\mathbb{P}(C=v).
\end{equation*}
\end{lemma}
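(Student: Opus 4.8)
The plan is to compute the trade-off function $T(P,Q)$ explicitly by applying the Neyman--Pearson lemma (Lemma~\ref{Neyman}) with the likelihood ratio identified in Lemma~\ref{Compdist}. Since $P=(A,C-A+1)$ and $Q=(A,C-A)$ differ only by a shift of one unit of mass from the second coordinate to the first, observing $(a,b)$ with $a+b=v+1$ we have $\mathbb{P}(P=(a,b))=\mathbb{P}(C=v)\mathbb{P}(A=a\mid C=v)$ and $\mathbb{P}(Q=(a,b))=\mathbb{P}(C=v)\mathbb{P}(A=a\mid C=v+? )$; carrying out the same bookkeeping as in Lemma~\ref{Compdist} the likelihood ratio reduces to a monotone function of $a/b$ (equivalently of $A$ once $C$ is fixed). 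Hence the optimal rejection region is of the form $\{\Lambda>t\}$, which after rearranging the inequality $\frac{a}{b}>t$ with $b=v-a$ (or $v+1-a$) becomes a threshold on $A$ of the form $A>\frac{tv+t}{tv-1}$ for each value $C=v$. This is exactly the event whose probability, summed against $\mathbb{P}(C=v)$, gives the stated $\alpha(t)$.

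The key steps, in order, are: (i) fix $C=v$ and write out $\Lambda=\Lambda(a,b)$ from Lemma~\ref{Compdist}, verifying it is increasing in $a$ so that the level-$t$ Neyman--Pearson test is $\{A>\tau(t,v)\}$ with $\tau(t,v)=\frac{tv+t}{tv-1}$; (ii) sum over $v$ to obtain $\alpha(t)=\sum_{v=0}^{n-1}\mathbb{P}\!\left(A>\frac{tv+t}{tv-1}\right)\mathbb{P}(C=v)$, and define the generalized inverse $g(\alpha)=\inf_t\{t:\alpha(t)\le\alpha\}$ so that the threshold $t=g(\alpha)$ realizes type I error at most $\alpha$; (iii) compute the corresponding type II error $\beta(t)=\mathbb{P}_Q(\text{accept }H_0)=\mathbb{P}_Q(A\le\tau(t,v))$, again conditioning on $C=v$ and using that under $Q$ the relevant count is shifted, which collapses the boundary term to $\mathbb{P}\!\left(A=\lceil\frac{g(\alpha)v-1}{g(\alpha)+1}\rceil\right)$; (iv) assemble $\beta=h(\alpha)=1-\alpha-\sum_{v=1}^{n-1}\mathbb{P}\!\left(A=\lceil\frac{g(\alpha)v-1}{g(\alpha)+1}\rceil\right)\mathbb{P}(C=v)$, and conclude $T(P,Q)\ge h$ since the Neyman--Pearson test is optimal (any other test of size $\le\alpha$ has type II error $\ge\beta$).

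I expect the main obstacle to be the careful handling of the discreteness: because $A\mid C=v$ is an integer-valued binomial, the threshold $\tau(t,v)=\frac{tv+t}{tv-1}$ is generally non-integer, so one must argue that the optimal test rejects on $\{A\ge\lceil\tau(t,v)\rceil\}$ (or strictly greater, depending on whether $\tau$ is an integer) and that the ``boundary mass'' contributing to $\beta$ is precisely the atom $\mathbb{P}(A=\lceil\frac{g(\alpha)v-1}{g(\alpha)+1}\rceil)$ — this requires tracking how the $a/b$ threshold translates between the $P$-indexing and $Q$-indexing of $A$, and checking the edge cases $v=0$ (which contributes nothing to the sum defining $\beta$, hence the index starts at $v=1$) and the behavior of $g$ at the jump points of $\alpha(t)$. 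The inequality direction ($T(P,Q)\ge h$ rather than equality) is what makes this tractable: it suffices to exhibit the Neyman--Pearson test and lower-bound its type II error, without needing to prove $h$ is itself a valid (convex, decreasing) trade-off function, which would be needed for equality.
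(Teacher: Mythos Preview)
Your approach is correct and essentially identical to the paper's: apply Neyman--Pearson with the likelihood ratio $\Lambda=a/b$ from Lemma~\ref{Compdist}, write $\alpha(t)=\mathbb{P}\bigl(\tfrac{A}{C-A+1}>t\bigr)$ and $\beta(t)=\mathbb{P}\bigl(\tfrac{A+1}{C-A}\le t\bigr)$, then condition on $C=v$ and rearrange. The paper's proof is in fact shorter than your outline---it does not dwell on the discreteness issues you raise, and the boundary term in step~(iii) falls out in one line once you notice that the type~I threshold $\tfrac{tv+t}{t+1}$ and the type~II threshold $\tfrac{tv-1}{t+1}$ differ by exactly~$1$, so the interval between them contains at most one integer.

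One caution on execution: when you actually carry out the algebra in step~(i), solving $\tfrac{a}{v-a+1}>t$ gives $A>\tfrac{tv+t}{t+1}$, not $\tfrac{tv+t}{tv-1}$ as printed in the lemma statement---the paper's own proof writes the former. Likewise, the $Q$ in the lemma header should be $(A+1,C-A)$ to match Lemma~\ref{Compdist} and the $\beta(t)$ computation; this resolves the ``$+?$'' you flagged. These are typos in the statement, not gaps in your plan.
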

\begin{proof}
Consider the reject rule in Lemma A.3, then
type I error \\

\begin{equation*}\label{eqn:1}
\begin{aligned}
\alpha(t)& = \mathbb{P}(\frac{A}{C-A+1}>t)  \\
& =\sum \limits_{v=0}^{n-1}\mathbb{P}(A>\frac{tv+t}{t+1})\mathbb{P}(C=v),
\end{aligned}
\end{equation*}
and type II error
\begin{equation*}\label{eqn:2}
\small
\begin{aligned}
\beta(t) & = \mathbb{P}(\frac{A+1}{C-A} \le t)  \\
& = \sum \limits_{v=0}^{n-1}\mathbb{P}(A \le \frac{tv-1}{t+1}) \mathbb{P}(C=v) \\
& = 1-\alpha(t)-\sum\limits_{v=0}^{n-1}\mathbb{P}(\frac{tv-1}{t+1}<A \le \frac{tv-1}{t+1}+1) \mathbb{P}(C=v).
\end{aligned}
\end{equation*}
Since $\alpha(t)$ is non-increasing, then for each $\alpha$, there exists unique $t$ such that $t=g(\alpha)$, where $g(\alpha)$ = $\mathop{\inf}\limits_t$ $\{t:\alpha(t) \le \alpha\}$. Hence, $\beta(t)=h(\alpha) = 1-\alpha-\sum\limits_{v=0}^{n-1}\mathbb{P}(A = \lceil\frac{g(\alpha)v-1}{g(\alpha)+1}\rceil)\mathbb{P}(C=v)$.
\end{proof}
\subsubsection*{Proof of Theorem \ref{thm::Exact}}
\begin{proof}
Similar in proof of Lemma \ref{lem:nodelta}, we can obtain that
\begin{equation*}\label{eqn:3}
\small
\begin{aligned}
\alpha(t) & = \mathbb{P}(\frac{A+\Delta}{C-A+1-\Delta}>t)  \\
& = \mathbb{P}(\Delta=0)\mathbb{P}(\frac{A}{C-A+1}>t)+\mathbb{P}(\Delta=1)\mathbb{P}(\frac{A+1}{C-A}>t),
\end{aligned}
\end{equation*}

\begin{equation*}\label{eqn:4}
\small
\begin{aligned}
\beta(t) & = \mathbb{P}(\frac{A+1-\Delta}{C-A+\Delta}\le t)  \\
& = \mathbb{P}(\Delta=0)\mathbb{P}(\frac{A+1}{C-A} \le t) + \mathbb{P}(\Delta=1)\mathbb{P}(\frac{A}{C-A+1} \le t).
\end{aligned}
\end{equation*}
Let $g(\alpha)$ = $\inf\limits_t$ $\{t:\alpha(t) \le \alpha\}$ and substitute the probability function of the $A$ and $C$ into the expression, we can obtain that
\begin{equation}\label{eqn:f}
\small
\begin{aligned}
\beta(t) & = h(\alpha)  \\
& = 1-\alpha-c_0\sum\limits_{v=1}^{n-1}\mathbb{P}\left(B_1(v)< A \le B_2(v) |C=v \right)\mathbb{P}(C=v),
\end{aligned}
\end{equation}
where $c_0 = \frac{e^{-\epsilon_0}-1}{e^{-\epsilon_0}+1}$, $B_1(v) = \frac{g(\alpha)v-1}{g(\alpha)+1},
B_2(v) = \frac{g(\alpha)v-1}{g(\alpha)+1}+1$, $\mathbb{P}(C=v)=\binom{n-1}{v} e^{-v\epsilon_0}(1-e^{-\epsilon_0})^{n-1-v}, v=0,1,\cdots,n-1$, 
$P(A=k|C=v)=\binom{v}{k}(\frac{1}{2})^k$, $k=0,1,\cdots,v$.
Then it follows that the mechanism $M$ satisfies $h$-DP. By combining this  with Lemma \ref{f2Renyi},
the proof is completed.
\end{proof}

\subsection{Proof of Lemma \ref{MultiCLT1}}
Consider the following two multinomial distributions $P_1$ and $Q_1$, where $P_1:= (n_0+1, n_1, n_2)$, $Q_1:= (n_0, n_1+1, n_2)$ and $(n_0, n_1, n_2) \sim MultiNom(n-1;\frac{p}{2},\frac{p}{2},1-p)$, $p=e^{-\epsilon_0}$.
It is easy to check that $T(P,Q) = T(P_1,Q_1)$, where $P$ and $Q$ are distributions defined in Lemma \ref{Compdist}. In addition, if $P_2$ and $Q_2$ are distributions defined in
Theorem \ref{thm::Exact}, then we need to calculate $T(P_2,Q_2)$, where $P_2 = (n_0+1-\Delta,n_1+\Delta,n_2)$ and
$Q_2 = (n_0+\Delta,n_1+1-\Delta,n_2)$,  $\Delta \sim Bern(p)$.
The following lemma is a basic property of multinomial distribution.

\begin{lemma}\label{ProMulti}
Assume $\xi$ = $(n_0,n_1,n_2)'$ is a random variable which obeys $MultiNom(n-1;\frac{p}{2},\frac{p}{2},1-p)$, then expectation of $\xi$ is
$\tilde{\pmb{\mu}}=(\frac{(n-1)p}{2},\frac{(n-1)p}{2},(n-1)(1-p))'$ and covariance matrix of $\xi$
is
$$\tilde{\pmb{\Sigma}} = (n-1)\left(
               \begin{array}{ccc}
                 \frac{p}{2}(1-\frac{p}{2}) & -\frac{p^2}{4} & -\frac{p(1-p)}{2} \\
                 -\frac{p^2}{4} & \frac{p}{2}(1-\frac{p}{2}) & -\frac{p(1-p)}{2} \\
                 -\frac{p(1-p)}{2} & -\frac{p(1-p)}{2} & p(1-p) \\
               \end{array}
             \right)
$$
\end{lemma}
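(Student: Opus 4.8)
The plan is to obtain both moments from the standard representation of a multinomial vector as a sum of i.i.d.\ one-hot (categorical) trials, which reduces the whole statement to a single-trial computation together with linearity of expectation and independence. Write $\xi = \sum_{k=1}^{n-1} \pmb{Y}_k$, where each $\pmb{Y}_k = (Y_{k,0},Y_{k,1},Y_{k,2})'$ is the indicator vector of the outcome of the $k$-th independent trial, so that $\mathbb{P}(\pmb{Y}_k = \pmb{e}_0) = \tfrac{p}{2}$, $\mathbb{P}(\pmb{Y}_k = \pmb{e}_1) = \tfrac{p}{2}$, $\mathbb{P}(\pmb{Y}_k = \pmb{e}_2) = 1-p$, with $\pmb{e}_j$ the $j$-th standard basis vector of $\mathbb{R}^3$, and the $\pmb{Y}_k$ are i.i.d. Set $p_0 = p_1 = \tfrac{p}{2}$ and $p_2 = 1-p$.

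First I would compute the first moments of a single trial: each coordinate $Y_{k,j}$ is Bernoulli with success probability $p_j$, so $\mathbb{E}[Y_{k,j}] = p_j$, and summing over $k$ gives $\mathbb{E}[n_j] = (n-1)p_j$, i.e.\ $\mathbb{E}[\xi] = \tilde{\pmb{\mu}} = \big(\tfrac{(n-1)p}{2}, \tfrac{(n-1)p}{2}, (n-1)(1-p)\big)'$. Next, for the single-trial covariance I would use $Y_{k,j}^2 = Y_{k,j}$ to get $\mathrm{Var}(Y_{k,j}) = p_j(1-p_j)$, and $Y_{k,i}Y_{k,j} = 0$ for $i \neq j$ (a trial lands in exactly one category) to get $\mathrm{Cov}(Y_{k,i},Y_{k,j}) = -p_ip_j$. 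By independence across trials, $\mathrm{Cov}(\xi) = \sum_{k=1}^{n-1}\mathrm{Cov}(\pmb{Y}_k) = (n-1)\,\pmb{\Sigma}_1$, where $\pmb{\Sigma}_1$ has diagonal entries $p_j(1-p_j)$ and off-diagonal entries $-p_ip_j$. Substituting $p_0 = p_1 = \tfrac{p}{2}$, $p_2 = 1-p$ gives diagonal entries $\tfrac{p}{2}(1-\tfrac{p}{2}),\ \tfrac{p}{2}(1-\tfrac{p}{2}),\ p(1-p)$ and off-diagonal entries $-\tfrac{p^2}{4}$ for the $(0,1)$ pair and $-\tfrac{p(1-p)}{2}$ for the $(0,2)$ and $(1,2)$ pairs, which is exactly $\tilde{\pmb{\Sigma}}$.

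The calculation is entirely routine, so there is no real obstacle; the one point worth flagging is that $\tilde{\pmb{\Sigma}}$ is \emph{singular} — its rows sum to zero, reflecting the deterministic constraint $n_0 + n_1 + n_2 = n-1$. This has no effect on the lemma itself, but it does matter downstream: Lemma~A.6 requires a positive definite covariance, so the asymptotic-normality argument of Lemma~\ref{MultiCLT1} cannot feed the full $3$-dimensional Gaussian into the trade-off-function formula. I would note here that one must instead pass to a non-degenerate lower-dimensional marginal (e.g.\ $(n_0,n_1)$, since $n_2$ is determined) or an equivalent linear projection before invoking the $T(\cdot,\cdot)$ computation, and defer that reduction to the proof of Lemma~\ref{MultiCLT1}.
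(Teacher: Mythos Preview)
Your argument is correct and matches the paper's approach: the paper simply remarks that the lemma ``can be obtained from simple calculation directly,'' and your i.i.d.\ one-hot decomposition is exactly the standard way to carry out that calculation. Your observation about the singularity of $\tilde{\pmb{\Sigma}}$ is also on target (the paper handles it by passing to the $(n_0,n_1)$ marginal), though note that the reduction is performed in the proof of Theorem~\ref{thm:asym} rather than in Lemma~\ref{MultiCLT1}, and the positive-definiteness hypothesis you cite belongs to Lemma~\ref{TNorm}, not to the present lemma.
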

This lemma can be obtained from simple calculation directly. Next, we prove that the multinomial distribution approximately obeys the multivariate normal distribution.

% \begin{lemma}\label{MultiCLT}
% Assume $\xi$ = $(n_0,n_1,n_2)'$ is a random variable which obeys multinomial distribution with parameters $(n-1;\frac{p}{2},\frac{p}{2},1-p)$, then for sufficiently large $n$, $\xi$ approximately follows the multivariate normal distribution
% $N(\tilde{\pmb{\mu}},\tilde{\pmb{\Sigma}})$, where $\tilde{\pmb{\mu}}$ and $\tilde{\pmb{\Sigma}}$ are same as the definition in Lemma \ref{ProMulti}.
% \end{lemma}
\subsubsection*{Proof of Lemma \ref{MultiCLT1}}
\begin{proof}
Since $\xi=(n_0,n_1,n_2)'$ obeys $MultiNom(n-1;p_0,p_1,p_2)$ with $p_0=p/2, p_1=p/2$ and $p_2 = 1-p$, the characteristic function
\begin{equation}
\phi_\xi(t_0,t_1,t_2) = E(e^{it_0n_0+it_1n_1+it_2n_2}),
\end{equation}
then
\begin{equation}\label{eqn:f}
\small
\begin{aligned}
\phi_\xi(t_0,t_1,t_2) & = \sum\limits^{n-1}_{n_0,n_1,n_2=0}\frac{n!}{n_0!n_1!n_2!}
p_0^{n_0}p_1^{n_1}p_2^{n_2}e^{it_0n_0+it_1n_1+it_2n_2}\\
% (p_0e^{it_0})^{n_0}(p_1e^{it_1})^{n_1}(p_2e^{it_2})^{n_2}  \\
& = (p_0e^{it_0}+p_1e^{it_1}+p_2e^{it_2})^{n-1}.
\end{aligned}
\end{equation}
Let $\tilde{\xi} = (\frac{n_0-(n-1)p_0}{\sqrt{n-1}},\frac{n_1-(n-1)p_1}{\sqrt{n-1}},\frac{n_2-(n-1)p_2}{\sqrt{n-1}})'$, then we have
$\phi_{\tilde{\xi}}(t_0,t_1,t_2) =$
$$(p_0e^{\frac{it_0}{\sqrt{n-1}}}+ p_1e^{\frac{it_1}{\sqrt{n-1}}}+p_2e^{\frac{it_2}{\sqrt{n-1}}})^{n-1}\\
e^{-\sqrt{n-1}i(p_0t_0+p_1t_1+p_2t_2)}.$$
Based on Taylor expansion of $e^y = 1+y+\frac{y^2}{2}+o(y^2)$ near point $0$, we can derive that
\begin{equation*}
\begin{aligned}
 & p_0e^{\frac{it_0}{\sqrt{n-1}}}+p_1e^{\frac{it_1}{\sqrt{n-1}}}+p_2e^{\frac{it_2}{\sqrt{n-1}}}  \\
=& 1+i\frac{a}{\sqrt{n-1}}-\frac{b}{2(n-1)}+o(\frac{1}{n-1}),
\end{aligned}
\end{equation*}
where $a=\sum\limits_{j=0}^2 p_j t_j, b=\sum\limits_{j=0}^2 p_j t_j^2.$ In addition, since $\log(1+y)=1-\frac{y^2}{2}+o(y^2)$ when $y\rightarrow 0$, we can obtain that
$ln(\psi_{\tilde \xi}(t))\rightarrow-\frac{b-a^2}{2}$ and
\begin{equation*}
\begin{aligned}
 -\frac{b-a^2}{2}&= -\frac{1}{2}[\sum\limits_{j=0}^2 p_j(1-p_j)t_j^2-\sum\limits_{j=0,j\ne k}^2 \sum\limits_{k=0}^{2} p_jp_kt_it_k]  \\
&= -\frac{1}{2}t'\tilde{\pmb{\Sigma}} t,
\end{aligned}
\end{equation*}
which is the logarithm of the characteristic function of multivariate normal distribution $N(\pmb{0},\tilde{\pmb{\Sigma}})$,
where $\tilde{\pmb{\Sigma}}$ is defined in Lemma  \ref{ProMulti}. According to the uniqueness of the characteristic function, $\tilde \xi \sim N(\pmb{0},\tilde{\pmb{\Sigma}})$ when $n$ is sufficiently large, which indicates that $\xi$ approximately follows $ N(\tilde{\pmb{\mu}},\tilde{\pmb{\Sigma}})$.
\end{proof}

\subsection{Proof of Theorem \ref{thm:asym}}
\begin{proof}
According to Proposition \ref{Pro:DP2Multi}, the key component is to measure distance between two distributions $P_2$ and $Q_2$, where $P_2 = (n_0+1-\Delta,n_1+\Delta,n_2)$ and
$Q_2 = (n_0+\Delta,n_1+1-\Delta)$, $(n_0, n_1, n_2) \sim MultiNom(n-1;\frac{p}{2},\frac{p}{2},1-p)$, $\Delta \sim Bern(p)$ and $p = e^{-\epsilon_0}$.
Let $P = (n_0+1,n_1,n_2)$ and
$Q = (n_0,n_1+1,n_2)$, we first prove that
\begin{equation}\label{eqP1Q1}
T(P,Q)=T(Q,P)\ge \Phi\left(\Phi^{-1}(1-\alpha)-\frac{2}{\sqrt{n-1}}e^{\frac{\epsilon_0}{2}}\right)
\end{equation}
approximately.
If the inequality (\ref{eqP1Q1}) holds, then $$T(P_2,Q_2)=\mathbb{P}(\Delta=0)T(P,Q)+\mathbb{P}(\Delta=1)T(Q,P),$$
 which indicates that
 \begin{equation}
 T(P_2,Q_2)=(\mathbb{P}(\Delta=0)+\mathbb{P}(\Delta=1))T(P,Q)=T(P,Q)
\end{equation}
 by using the fact that $T(P,Q) = T(Q,P)$.\\
We will now prove that the inequality (\ref{eqP1Q1}) holds.
Combined with  Lemma \ref{MultiCLT1},
the key is to prove the following formula,
\begin{equation}
T(N(\tilde{\pmb{\mu}}_0,\tilde{\pmb{\Sigma}}),N(\tilde{\pmb{\mu}}_1,\tilde{\pmb{\Sigma}}))\ge \Phi\left(\Phi^{-1}(1-\alpha)-\frac{2}{\sqrt{n-1}}e^{\frac{\epsilon_0}{2}}\right),
 \end{equation}
 where $\tilde{\pmb{\mu}}_0 = \mathbb{E}(P), \tilde{\pmb{\mu}}_1 = \mathbb{E}(Q)$,  and $\tilde{\pmb{\Sigma}}=Cov(P)=Cov(Q)$.
 However, $\tilde{\pmb{\Sigma}}$ is not irreversible because there exists a linear relationship between sub-vectors: $n_0+n_1+n_2=n-1$. Hence, we need to combine postprocessing properties of $f$-DP and the fact that sub-vector of  a multivariate normal variable is still a multivariate normal variable.\\
Since $P = (n_0+1,n_1,n_2)'$ and $Q = (n_0,n_1+1,n_2)'$, we can use sub-vector $P_2 = (n_0+1,n_1)'$ and $Q_2 = (n_0,n_1+1)'$ to construct $P$ and $Q$ by formula $n_2=n-1-n_0-n_1$. Then, $P=Proc(P_2)$ and $Q=Proc(Q_2)$, which indicates that $T(P,Q)\le T(P_2,Q_2)$ by Fact \ref{fact:postprocess}.\\
From another perspective, it is obvious that $P_2=Proc(P)$ and $Q=Proc(Q)$ if we take $Proc(\cdot)$ as a projection, then $T(P,Q) \ge T(P_2,Q_2)$.
Notice that $P_2$ obeys $N(\pmb{\mu}_0,\pmb{\Sigma})$ and $Q_2$ obeys $N(\pmb{\mu}_1,\pmb{\Sigma})$,
where $\pmb{\mu}_0=(\frac{(n-1)p}{2}+1,\frac{(n-1)p}{2})', \pmb{\mu}_1=(\frac{(n-1)p}{2},\frac{(n-1)p}{2}+1)'$ and $$\pmb{\Sigma} = (n-1) \left(                                                                  \begin{array}{cc}
                                                                                                                       \frac{p}{2}(1-\frac{p}{2}) & -\frac{p^2}{4} \\
                                                                                                                       -\frac{p^2}{4} & \frac{p}{2}(1-\frac{p}{2}) \\
                                                                                                                     \end{array}
         \right).$$
After simple algebraic operation, we can obtain that $$\pmb{\Sigma}^{-1} = \frac{1}{n-1}\left(  \begin{array}{cc}
  \frac{2-p}{p(1-p)} & \frac{1}{1-p} \\
  \frac{1}{1-p} & \frac{2-p}{p(1-p)}\\
\end{array}
 \right),$$
and
\begin{equation}\label{SqrtBound}
\begin{aligned}
& (\pmb{\mu}_1-\pmb{\mu}_0)'\pmb{\Sigma}^{-1} (\pmb{\mu}_1-\pmb{\mu}_0) \\
&= (-1,1)\frac{1}{n-1}\left(\begin{array}{cc}
\frac{2-p}{p(1-p)} & \frac{1}{1-p} \\
\frac{1}{1-p} & \frac{2-p}{p(1-p)}\\
\end{array}
\right)\left(
\begin{array}{c}
-1 \\
1 \\
\end{array}
\right)  \\
& = \frac{4}{(n-1)p}.
\end{aligned}
\end{equation}
Then $T(P,Q) = \Phi(\Phi^{-1}(1-\alpha)-\frac{2}{\sqrt{(n-1)p}})$, and the inequality (\ref{eqP1Q1}) can be obtained immediately by using Lemma \ref{TNorm}.
\end{proof}

\end{document}